\definecolor{gray}{gray}{0.4}
\title[Order eight]{Order eight non--symplectic automorphisms on elliptic K3 surfaces}
\author{Dima Al Tabbaa}
\address{Laboratoire de Math\'ematiques et Applications, UMR CNRS 7348,
			Universit\'e de Poitiers, T\'el\'eport 2, Boulevard Marie et Pierre Curie,
			86962 FUTUROSCOPE CHASSENEUIL, France}
\email{Dima.Al.Tabbaa@math.univ-poitiers.fr}
\author{Alessandra Sarti}
\address{Laboratoire de Math\'ematiques et Applications, UMR CNRS 7348,
			Universit\'e de Poitiers, T\'el\'eport 2, Boulevard Marie et Pierre Curie,
			86962 FUTUROSCOPE CHASSENEUIL, France}
\email{sarti@math.univ-poitiers.fr}
\urladdr{http://www-math.sp2mi.univ-poitiers.fr/~sarti/}
\newtheorem{theorem}{Theorem}[section]
\newtheorem{pro}[theorem]{Proposition}
\newtheorem{lemma}[theorem]{Lemma}
\newtheorem{example}[theorem]{Example}
\newtheorem{remark}[theorem]{Remark}
\DeclareMathOperator{\Pic}{Pic}
\DeclareMathOperator{\Fix}{Fix}
\DeclareMathOperator{\tr}{tr}
\DeclareMathOperator{\rk}{rk}
\DeclareMathOperator{\Aut}{Aut}
\newcommand{\IC}{\mathbb{C}}
\newcommand{\IP}{\mathbb{P}}
\newcommand{\lra}{\longrightarrow}
\subjclass[2010]{Primary 14J28; Secondary 14J50, 14J10}
\keywords{non-symplectic automorphisms, K3 surfaces}
\thanks{}
\begin{document}

\begin{abstract}
  In this paper we classify complex $K3$ surfaces with non-symplectic automorphism of order 8  that 
  leaves invariant a smooth elliptic curve. We show that the rank of the Picard group is either 
  $10$, $14$ or $18$ and the fixed locus is the disjoint union of elliptic curves, rational curves and points, whose number does not exceed
  $1$, $2$, respectively $14$. We give examples corresponding to several types of fixed locus in the classification.

\end{abstract}

\maketitle

\section*{Introduction}

The study of automorphisms of K3 surfaces started with the pioneering work of Nikulin \cite{Nikulin1} and several people contributed then to the developpement of the theory, see \cite{zhang} for a short survey. In the paper we investigate purely non-symplectic automorphisms of order $8$, i.e. automorphisms that multiply the non-degenerate holomorphic two form by a primitive 8th root of unity.

The study of non-symplectic automorphisms of prime order was completed by several authors: Nikulin in \cite{nikulinfactor}, Artebani, Sarti and Taki in \cite{AS3,takiauto, ast}. The study of non-symplectic automorphisms of non-prime order turns out to
be more complicated. Indeed, in this situation the ''generic'' case does not imply that the action of the automorphism 
 is trivial on the Picard group \cite[Section 11]{DK}. In the paper \cite{Taki}, Taki completely describes the case 
when  the order of the automorphism is a prime power and the action is trivial on the Picard group. If we consider non-symplectic, non-trivial  automorphisms of  order $2^t$, then by results 
of Nikulin we have $1\leq t\leq 5$, and 
by a recent paper by Taki \cite{Taki32} there is only one $K3$ surface that admits a non-symplectic automorphism of order 32. 
Some further results in this direction are contained in a paper by Sch\"utt  in the case of automorphisms
of a $2$-power order \cite{matthias} and in a paper by Artebani and Sarti  in the case of  order 4 \cite{ASorder4}. Recently in \cite{paper16} the two authors and Taki completed the study for purely non-symplectic automorphisms of order 16. 

This paper mainly deals with purely non-symplectic automorphisms $\sigma$ of order eight on elliptic K3 surfaces under the assumption that thier fourth power $\sigma^4$ is the identity on the Picard lattice. This corresponds to the situation for the generic K3 surface in the moduli space of K3 surfaces with non-symplectic automorphism of order $8$ and fixed action on the second cohomology with integer coefficients, see \cite[Section 10]{DK}. 
The fixed locus $\Fix(\sigma)$ of such an automorphism $\sigma$ is the disjoint union of smooth curves and points. In the paper we give a complete classification in the case that $\Fix(\sigma^4)$ contains an elliptic curve. More precisely let $X$ be a K3 surface, $\omega_X$ a generator of $H^{2,0}(X)$, $\sigma\in \Aut(X)$ such that $\sigma^* \omega_X=\zeta_8\omega_X$, where $\zeta_8$ denotes a primitive 8th root of unity. We denote by $r,l,m$ and $m_1$ the rank of the eigenspaces of $\sigma^*$ in $H^2(X,\IC)$ relative to the eigenvalues $1,-1,i$ and $\zeta_8$ respectively. We also denote by $k$  the number of smooth rational curves fixed by $\sigma$, by $N$ the number of isolated fixed points in $\Fix(\sigma)$. We prove the following result: 
\begin{theorem}\label{intro theorem1}
 Let $\sigma$ be a purely non-symplectic order eight automorphism acting on a $K3$ surface $X$ such that $\sigma^4$ acts identically on $\Pic(X)$. Assume that 
$\Fix(\sigma^4)$ 
contains a curve $C$ of genus $g(C)=1$, then we have the following possibilities for the fixed locus
\begin{itemize}
\item If $\Fix(\sigma)$ contains the elliptic curve $C$, then 
\begin{eqnarray*}
(k,N,\rk\Pic(X))=(0,2,10),(0,4,14).
\end{eqnarray*}
\item If $\Fix(\sigma^2)$ contains the elliptic curve $C$  and there is no elliptic curve  fixed by $\sigma$, then
\begin{eqnarray*}
(k,N,\rk\Pic(X))=(0,2,10),~(0,6,10),~(0,4,14),~(1,10,14).
\end{eqnarray*}
\item If $\sigma^i$ for $i=1,2$ does not fix an elliptic curve, then
\begin{eqnarray*}(k,N,\rk\Pic(X))=(0,2,10),~(0,4,14),~(0,2,14),~(0,6,14),\\
(1,8,14),~(0,2,18),~(0,6,18),(2,14,18).
\end{eqnarray*}
\end{itemize}
\end{theorem}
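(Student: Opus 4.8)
The plan is to combine the holomorphic Lefschetz fixed point formula, the topological Lefschetz formula, and the local action analysis of $\sigma$ near its fixed locus, together with lattice-theoretic constraints on the invariant and anti-invariant sublattices coming from Nikulin's theory. First I would set up the local normal forms: near an isolated fixed point $p$, $\sigma$ acts on the tangent space as $\operatorname{diag}(\zeta_8^{a},\zeta_8^{b})$ with $a+b\equiv 1 \pmod 8$, giving finitely many point types; near a fixed curve $\sigma$ acts as $\operatorname{diag}(1,\zeta_8)$ on the normal-plus-tangent directions, so fixed curves contribute with a definite ``type'' to the holomorphic Lefschetz number. I would record the contribution of each point type and curve type to $\sum_i (-1)^i \operatorname{tr}(\sigma^* \mid H^i(X,\mathcal O_X))$, which for a purely non-symplectic $\sigma$ of order $8$ equals $1+\overline{\zeta_8}$. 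Doing the same for $\sigma^2$ (order $4$, holomorphic Lefschetz number $1+\overline{\zeta_8^2}=1-i$) and $\sigma^4$ (the non-symplectic involution, holomorphic Lefschetz number $2$) gives a system of equations relating $N$, $k$, the number of fixed elliptic/rational curves, and the genera of curves fixed by the various powers.

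Next I would bring in the topological side. The Lefschetz number $\sum (-1)^i \operatorname{tr}(\sigma^* \mid H^i(X,\IC)) = 2 + \operatorname{tr}(\sigma^* \mid H^2)$ equals the Euler characteristic of $\operatorname{Fix}(\sigma)$, i.e. $N + \sum (2-2g)$ over fixed curves; and $\operatorname{tr}(\sigma^* \mid H^2)$ is determined by $r,l,m,m_1$ via $r - l + 2m\cos(\pi/2) + \dots$ — more precisely by the eigenvalue multiplicities, using $r + l + 2m + 4m_1 = 22$ and the fact that $m_1$ (the multiplicity of a primitive root) equals the multiplicity of its conjugate. The assumption that $\sigma^4$ acts trivially on $\operatorname{Pic}(X)$ forces $\operatorname{Pic}(X)$ to sit inside the invariant lattice of $\sigma^4$, while the transcendental lattice carries the $\zeta_8$-action; Nikulin's discriminant-form machinery then constrains $r = \operatorname{rk}\operatorname{Pic}(X)$ to the short list of admissible values, and the eigenspace dimensions of $\sigma^*$ and $(\sigma^2)^*$ acting on $H^2$ get pinned down as well (this is where the $\operatorname{rk}\operatorname{Pic}(X)\in\{10,14,18\}$ trichotomy from the abstract enters). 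I would treat the three bullet cases — $C$ fixed by $\sigma$, $C$ fixed only by $\sigma^2$, $C$ fixed only by $\sigma^4$ — separately, because in each the curve $C$ contributes differently (or not at all) to the fixed-point data of $\sigma$ and $\sigma^2$, changing the equations.

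Concretely, in each case I would: (i) use the classification of $\operatorname{Fix}(\sigma^4)$ for non-symplectic involutions (Nikulin) together with the hypothesis that it contains a genus-$1$ curve to fix $\operatorname{rk}\operatorname{Pic}(X)$ and the invariant lattice; (ii) analyze how $\sigma^2$ permutes and fixes the components of $\operatorname{Fix}(\sigma^4)$, using Artebani–Sarti's order-$4$ results to get the possible $\operatorname{Fix}(\sigma^2)$ configurations compatible with (i); (iii) similarly analyze how $\sigma$ acts on $\operatorname{Fix}(\sigma^2)$; and finally (iv) feed all of this into the holomorphic and topological Lefschetz formulas for $\sigma$, $\sigma^2$, $\sigma^4$, solving the resulting Diophantine system for $(k,N)$. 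The elliptic fibration structure (the curve $C$ being a fiber or multisection, and $\sigma$ acting on the base $\IP^1$) provides extra rigidity: the induced action on the base has a specific order and fixed points, constraining which fibers can be $\sigma$-invariant and hence bounding $k$ and $N$.

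The main obstacle I anticipate is step (iv) combined with ruling out the numerically-feasible-but-geometrically-impossible solutions: the Lefschetz equations alone typically admit several integer solutions, and discarding the spurious ones requires a finer local analysis — e.g. showing a fixed curve of a given genus cannot coexist with a prescribed number of isolated points of each type on a K3, or that a configuration would force the wrong discriminant form on $\operatorname{Pic}(X)$ or $T(X)$. Handling the interplay between $\sigma$, $\sigma^2$, $\sigma^4$ consistently — making sure a point counted as isolated for $\sigma$ lies on the correct component of $\operatorname{Fix}(\sigma^2)$ and $\operatorname{Fix}(\sigma^4)$, with compatible local rotation numbers — is the bookkeeping-heavy heart of the argument, and is where I expect the bulk of the casework to live.
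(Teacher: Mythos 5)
Your toolbox is the same as the paper's: holomorphic and topological Lefschetz formulas for $\sigma$, $\sigma^2$, $\sigma^4$, the local linearization types $(t,s)$ with $t+s\equiv 1\pmod 8$, Nikulin's classification of non-symplectic involutions to force $\rk\Pic(X)\in\{10,14,18\}$, and the order-$4$ results for $\sigma^2$. But there is a genuine gap at exactly the point you yourself flag as the ``bookkeeping-heavy heart'': you defer the elimination of spurious Lefschetz solutions to an unspecified ``finer local analysis,'' whereas the paper has a concrete mechanism that you only gesture at in passing. The linchpin is that the linear system $|C|$ gives a $\sigma$-invariant elliptic fibration on which $\sigma$ acts with order $8$ on the base $\IP^1$ (Lemma \ref{8basis}), hence fixes exactly two points of the base; so the entire fixed locus of every power of $\sigma$ is concentrated in the two invariant fibers $C$ and $C'$. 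Since $\sigma^4$ fixes no isolated points and its fixed curves are disjoint, all $k_{\sigma^4}\in\{0,4,8\}$ fixed rational curves of $\sigma^4$ must lie inside the single fiber $C'$, which forces $C'$ to be smooth, of type $IV^*$ or $I_8$ (when $\rk\Pic(X)=14$), or of type $I_{16}$ (when $\rk\Pic(X)=18$). The classification then reduces to the finitely many combinatorial actions of $\sigma$ on the components of $C'$ (preserving each component, reflection, rotation of order $2$ or $4$), each of which pins down $k$ and $(n_2,n_3,n_4)$ via the alternating pattern of local actions along a chain of invariant rational curves (Lemma \ref{Lemma 4} and Remark \ref{localact}), after which the Lefschetz relations of Propositions \ref{proposition1} and \ref{Le2} determine $N$ and $(r,l,m)$. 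Without committing to this step your plan does not actually terminate: the Diophantine system from the Lefschetz formulas alone admits more integer solutions than appear in Theorem \ref{intro theorem1}, and the discriminant-form arguments you invoke as a fallback are not what excludes them here. You should also separate, as the paper does via Remark \ref{elliptic}, the possible actions of $\sigma$ on the smooth invariant fiber $C$ itself (pointwise fixed, translation of order $2$ or $4$, involution with four $(4,5)$-points, or order-$4$ action with two $(2,7)$/$(3,6)$-points), since this is what distinguishes the three bullets of the statement.
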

In Table \ref{table elliptic} we list all the possibilities in detail and we describe the action of $\sigma$ on the fibers of the elliptic fibration. We have in total 16 possibilities and we give examples for all the cases except for some special cases when the action of $\sigma$ is a translation on the invariant elliptic curve or it acts as a rotation on some reducible fibers of the fibration. 

The result of the paper are partially contained in the PhD thesis \cite{dimathesis} of the first author under the supervision of the second author.
 
%{\bf Acknowledgements}: This work was partially supported by the grant 346300 for IMPAN from the Simons Foundation and the matching 2015-2019 Polish MNiSW fund. Part of the work was done by the second author during the Polish Algebraic Geometry Mini-Semester, 18 april-18 june 2016. She warmly thanks the organizers for the stimulating working atmosphere. 
%%%%%%%%%%%%%%%%%%%%%%%%%%%%%%%%%%%%%%%%%
\section{Basic facts}\label{section fixed locus}
Let $X$ be a $K3$ surface and $\sigma\in\Aut(X)$. We assume that $\sigma^* \omega_X=\zeta_8\omega_X$, where $\zeta_{8}=e^{\frac{2\pi i}{8}}$ is a primitive 8th root of the unity. Such a $\sigma$ is called {\it purely non-symplectic}, for simplicity we just call it  {\it non-symplectic}, always meaning that the action is by a primitive 8th root of unity. 

 We denote by $r_{\sigma^j},l_{\sigma^j},m_{\sigma^j}$ and $m_1$ for $j=1,2,4$ the rank of the eigenspace of $(\sigma^j)^*$ in $H^2(X,\mathbb{C})$ relative to the eigenvalues $1,-1,i$ and $\zeta_8$ 
respectively (clearly $m_{\sigma^4}=0$). For 
simplicity we just write $r,l,m$ for $j=1$. We recall the invariant lattice:
$$
S(\sigma^j)=\{x\in H^2(X,\mathbb{Z})\,|\, (\sigma^j)^*(x)=x\},
$$
and its orthogonal complement: 
$$
T(\sigma^j)=S(\sigma^j)^{\perp}\cap H^2(X,\mathbb{Z}).
$$ 
 
Since the automorphism acts purely non-symplectically,  $X$ is projective, see \cite[Theorem 3.1]{Nikulin1}) so that if we denote 
$\rk S(\sigma^j)=r_{\sigma^j}$, we have that $r_{\sigma^j}>0$ for all $j=1,2,4$ (one can always find an invariant ample class). 
 On the other hand, one can easily show that $S(\sigma^j)\subseteq \Pic(X)$ for $j=1,2,4$  so that the transcendental lattice satisfies $T_X \subseteq T(\sigma^j)$ for $j=1,2,4$. For simplicity we write $T(\sigma):=T(\sigma^1)$. Recall that the action of $\sigma$ on $T_X$ is by primitive 8th roots of the unity, see \cite[Theorem 3.1 (c)]{Nikulin1}.
\begin{remark}\label{rlm}
 It is a straightforward computation to show that the invariants $r_{\sigma^j},l_{\sigma^j},m_{\sigma^j}$ and $m_1$ with $j=1,2,4$ satisfy the following relations:
$$
\begin{array}{l l}
r_{\sigma^2}=r+l; & r_{\sigma^4}=r+l+2m;\\
l_{\sigma^2}=2m; & l_{\sigma^4}=4m_1;\\
2m_{\sigma^2}=4m_1. & 
\end{array}
$$
We remark that the invariants $l_{\sigma^2}$ and $m_{\sigma^2}$ are  even numbers.
\end{remark}
The moduli space of $K3$ surfaces carrying a non-symplectic  automorphism of order 8 with a given action on the $K3$ lattice is known to be the quotient of a complex ball of dimension $m_1-1$, see \cite[$\S 11$]{DK}, 
$$\mathbb{B}=\{[w]\in \mathbb{P}(V): (w,\overline{w})>0\},$$
where $V$ is the $\zeta_8$-eigenspace of $\sigma^*$ in $T(\sigma^{4})\otimes \mathbb{C}$.
This implies that the Picard group of a $K3$ surface, corresponding to the generic point,   
equals $S(\sigma^{4})$ (see \cite[Theorem 11.2]{DK}). 

 By \cite[Theorem 3.1]{Nikulin1} the eigenvalues of $\sigma$ on $T_X$ are primitive $8$th roots of unity so $\textrm{rk}( T_X)=4m_1$. Since $0<\rk( T_X)\leq 21$ we have that $m_1\leq 5$.
%%%%%%%%%%%%%%%%%%%%%%%%%%%%%%%%%
\section{The fixed locus}
We denote by $\Fix(\sigma^j)$, $j=1,2,4$ the fixed locus of the automorphism $\sigma^j$ such that
$$
\Fix(\sigma^j)=\{x\in X\,|\,\sigma^j(x)=x\}.
$$
Clearly  $\Fix(\sigma)\subseteq \Fix(\sigma^2)\subseteq \Fix(\sigma^4)$. To describe the fixed locus of order
8 non-symplectic automorphisms we start recalling the following result about non-symplectic involutions (see \cite[Theorem 4.2.2]{nikulinfactor}).
\begin{theorem}\label{nikulin}
 Let $\tau$ be a non-symplectic involution on a $K3$ surface $X$. The fixed locus of $\tau$ is either empty, the disjoint union of two elliptic curves or the disjoint union of a smooth curve of genus 
$g\geq 0$ and $k$ smooth rational curves.

 Moreover, its fixed lattice $S(\tau) \subset \rm{Pic}(X)$ is a 2-elementary lattice with determinant $2^a$ such that:
\begin{itemize}
 \item $S(\tau)\cong U(2)\oplus E_8(2)$ iff the fixed locus of $\tau$ is empty;
\item $S(\tau)\cong U\oplus E_8(2)$ iff $\tau$ fixes two elliptic curves;
\item $2g=22-rk S(\tau)-a$ and $2k=rk S(\tau)-a$ otherwise.
\end{itemize}
\end{theorem}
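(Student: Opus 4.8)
The plan is to combine a local analysis of the fixed points, the theory of $2$-elementary lattices, the topological Lefschetz fixed point formula, and a study of the quotient surface $X/\tau$. \textbf{Step 1 (local structure).} First I would show that $\Fix(\tau)$ is a (possibly empty) disjoint union of \emph{smooth curves}, with no isolated points. Since $\tau$ has finite order it is linearizable near any fixed point $p$, so $\tau$ acts on $T_pX$ as a diagonalizable involution with eigenvalues in $\{1,-1\}$; the induced action on $\Lambda^2 T_p^*X$, the fibre at $p$ of the canonical bundle, is the product of these eigenvalues and equals $-1$ because $\tau^*\omega_X=-\omega_X$. Hence the eigenvalues on $T_pX$ are $1$ and $-1$, so in suitable local coordinates $\tau(x,y)=(x,-y)$ and $\Fix(\tau)$ is locally $\{y=0\}$; in particular $\tau$ acts by $-1$ on the normal bundle of each fixed curve.

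\textbf{Step 2 (the invariant lattice).} Writing $S=S(\tau)$, $T=T(\tau)$ and $\rho=\rk S$, the identity $2x=(x+\tau^*x)+(x-\tau^*x)$ with $x+\tau^*x\in S$ and $x-\tau^*x\in T$ gives $2H^2(X,\IZ)\subseteq S\oplus T$; since $H^2(X,\IZ)$ is unimodular this forces the discriminant group of $S$ to be killed by $2$, so $S$ is even $2$-elementary with $\det S=\pm 2^{a}$, where $a$ is its length. As $\tau^*\omega_X\neq\omega_X$ we have $\omega_X\perp S$, hence $S\subseteq\Pic(X)$, and averaging an ample class over $\langle\tau\rangle$ produces an invariant ample class, so $S$ has signature $(1,\rho-1)$. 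By Nikulin's classification, an even $2$-elementary lattice of signature $(1,\rho-1)$ admitting a primitive embedding into the $K3$ lattice is determined up to isometry by the triple $(\rho,a,\delta)$, with $\delta\in\{0,1\}$ recording whether the discriminant quadratic form is $\IZ/2\IZ$-valued; and, by the surjectivity of the period map together with the equivariant global Torelli theorem, every admissible $S$ is realized as $S(\tau)$ and the deformation type of $(X,\tau)$ — hence $\Fix(\tau)$ — depends only on $S$.

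\textbf{Step 3 (the numerics via the quotient).} The topological Lefschetz fixed point formula gives $e(\Fix(\tau))=\sum_i(-1)^i\tr(\tau^*\mid H^i(X,\IR))=1+(2\rho-22)+1=2\rho-20$, using $H^1(X,\IR)=H^3(X,\IR)=0$. A second, independent relation comes from the mod $2$ reduction: the Smith exact sequences of the $\IZ/2\IZ$-action pin down $\tau^*$ on $H^2(X,\IZ)$ as an integral representation of $\IZ/2\IZ$, in particular its number $a$ of regular summands, in terms of the $\IZ/2\IZ$-Betti numbers of $\Fix(\tau)$. To get the stated trichotomy I would pass to $Y=X/\tau$, which is smooth because $\Fix(\tau)$ is a smooth divisor; one checks $q(Y)=0$ and that the branch curve $B:=\Fix(\tau)/\tau$ lies in $|-2K_Y|$ since $K_X=0$. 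If $B=\emptyset$, then $\tau$ is free and $Y$ is an Enriques surface, which forces $(\rho,a)=(10,10)$ and $S\cong U(2)\oplus E_8(2)$. If $B\neq\emptyset$, then $-2K_Y$ is effective and nonzero while $q(Y)=0$, so $\kappa(Y)=-\infty$ and $Y$ is rational; adjunction on a component $B_i$ of $B$ gives $B_i^2=4(g(B_i)-1)$, and the Hodge index theorem on $Y$ shows that at most one numerical class among the components of $B$ can have non-negative self-intersection. This splits into two cases: either $B$ is a union of two disjoint elliptic curves in a common numerical class — then $Y$ is a rational elliptic surface, $(\rho,a)=(10,8)$, $S\cong U\oplus E_8(2)$, and $\Fix(\tau)$ is two elliptic curves — or $\Fix(\tau)$ is the disjoint union of one curve $C$ of genus $g\geq 0$ and $k$ smooth rational curves (the latter being the components of $B$ of arithmetic genus $0$, which are $(-4)$-curves in $Y$ and lift to $(-2)$-curves in $X$). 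In the last case, combining $e(\Fix(\tau))=(2-2g)+2k=2\rho-20$ with the relation governing $a$ yields exactly $2g=22-\rk S(\tau)-a$ and $2k=\rk S(\tau)-a$.

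\textbf{Main obstacle.} The Lefschetz bookkeeping is routine; the substantive input is on the lattice side — Nikulin's uniqueness theorem for $2$-elementary lattices together with the equivariant Torelli theorem, which upgrade a list of numerical possibilities to a genuine classification — and, in the two degenerate configurations, the identification of $S(\tau)$ with precisely $U(2)\oplus E_8(2)$ and $U\oplus E_8(2)$, which requires matching the discriminant form of $S(\tau)$ to the topology of the branch locus $B\subset Y$ and ruling out a priori other disconnected fixed configurations.
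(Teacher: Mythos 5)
This statement is not proved in the paper at all: it is quoted verbatim as Theorem 4.2.2 of Nikulin's paper \cite{nikulinfactor}, so there is no in-paper argument to compare against. Your sketch does follow the architecture of Nikulin's actual proof: local linearization showing $\Fix(\tau)$ is a disjoint union of smooth curves with no isolated points, the $2x=(x+\tau^*x)+(x-\tau^*x)$ trick to show $S(\tau)$ is $2$-elementary, the topological Lefschetz formula, and the analysis of the quotient $Y=X/\tau$ (Enriques, or rational with branch divisor in $|-2K_Y|$, with the Hodge index theorem forcing at most one component of positive genus). All of that is correct, including the adjunction computation $B_i^2=4(g(B_i)-1)$, which is valid precisely because the components of $B$ are disjoint.

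The one genuine gap is at the heart of the numerics. The Lefschetz formula gives only one of the two needed relations, namely $(2-2g)+2k=2\rho-20$, i.e.\ $k-g=\rho-11$. To extract both $2g=22-\rho-a$ and $2k=\rho-a$ you need a second, independent relation, $g+k=11-a$, tying the number $a$ of generators of the discriminant group to the topology of $\Fix(\tau)$. You invoke ``Smith exact sequences'' for this but never derive it; that relation (in Nikulin's treatment it comes from comparing the total $\mathbb{Z}/2$-Betti number of the fixed point set with the integral representation type of $\tau^*$ on $H^2(X,\mathbb{Z})$, i.e.\ the number of trivial, sign, and regular $\mathbb{Z}[\mathbb{Z}/2]$-summands) is the substantive content of the theorem and cannot be treated as bookkeeping. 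Similarly, the identifications $S(\tau)\cong U(2)\oplus E_8(2)$ in the free case and $U\oplus E_8(2)$ in the two-elliptic-curves case are asserted rather than established; the former requires computing the pullback of $H^2$ of an Enriques surface, the latter a comparison with the lattice of a rational elliptic surface. As an outline of the known proof your proposal is sound, but these two points would need to be filled in for it to stand on its own.
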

The action 
of $\sigma$ at a point in $\Fix(\sigma)$ can be locally diagonalized as follows (up to permutation of the coordinates, but this does not play any role in the classification):
$$A_{1,0}=
 \left(
  \begin{array}{ c c }
     \zeta_{8} & 0 \\
     0 & 1
  \end{array} \right),
A_{2,7}=\left(
  \begin{array}{ c c }
     i & 0 \\
     0 & \zeta^{7}_{8}
  \end{array} \right),
 A_{3,6}=
 \left(
  \begin{array}{ c c }
     \zeta^{3}_{8} & 0 \\
     0 & -i
  \end{array} \right),
A_{4,5}=\left(
 \begin{array}{ c c }
     -1 & 0 \\
    0 & \zeta^{5}_{8}
  \end{array} \right).
$$
In the first case the point belongs to a smooth fixed curve, while in the other cases it is an isolated fixed point. We say that an isolated point $x \in \Fix(\sigma)$ is of {\it type $(t,s)$} if the
local action at $x$ is given by $A_{t,s}$. We denote by $n_{t,s}$ the number of isolated
 fixed points by $\sigma$ with matrix $A_{t,s}$. We further denote by $N_{\sigma^j}$, $k_{\sigma^j}$, $j=1,2,4$ the number of isolated points and smooth rational curves in $\Fix(\sigma^j)$. We observe that $N_{\sigma^4}=0$ since $\sigma^4$ only fixes curves (or is empty) as explained
in Theorem \ref{nikulin}. For simplicity we write $N:=N_{\sigma}$ and $k:=k_{\sigma}$. The fixed locus of an automorphism $\sigma$ 
is then
$$
\Fix(\sigma)=C\cup R_1\cup\ldots\cup R_k\cup\{p_1,\ldots,p_N\}
$$
where $C$ is a smooth curve of genus $g\geq 0$, $R_i$ are smooth disjoint rational curves and $p_i$ are isolated 
points. We will see that the fixed locus can never contain two elliptic curves or be empty.
\begin{pro}\label{proposition1}
Let $ \sigma$ be a purely non-symplectic automorphism of order 8 acting on a $K3$ surface $X$. Then $\Fix(\sigma)$ is never empty nor it can be the union of two smooth elliptic curves. It the disjoint union of smooth curves and
$N\geq 2$ isolated points. Moreover, the following relations hold:

$$ n_{2,7} + n_{3,6} = 2+4\alpha,\qquad n_{4,5}+n_{2,7}-n_{3,6} =  2+2\alpha, \qquad N=2+r-l-2\alpha .$$
Here we denote  $\alpha =\sum\limits_{K\subset \Fix(\sigma)}(1-g(K))$.
\end{pro}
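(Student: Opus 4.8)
The plan is to run the standard holomorphic and topological Lefschetz fixed-point machinery for $\sigma$ and combine it with the known structure of $\Fix(\sigma^4)$ and $\Fix(\sigma^2)$ coming from Theorem \ref{nikulin}. First I would write down the holomorphic Lefschetz number
\[
\sum_{j=0}^{2}(-1)^j\tr\bigl(\sigma^*\mid H^j(X,\calO_X)\bigr)=1+\overline{\zeta_8}=1+\zeta_8^{7},
\]
and equate it with the contribution of the fixed locus: each fixed curve $K$ of genus $g(K)$ contributes a term involving $(1-g(K))$ (after using that $\sigma$ acts on the normal bundle by $\zeta_8$, local type $A_{1,0}$), and each isolated point of type $(t,s)$ contributes $\frac{1}{(1-\lambda_1)(1-\lambda_2)}$ with $(\lambda_1,\lambda_2)$ the eigenvalues in $A_{t,s}$. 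Separating real and imaginary parts of this single complex equation yields two independent linear relations among $\alpha=\sum_{K\subset\Fix(\sigma)}(1-g(K))$, $n_{2,7}$, $n_{3,6}$, $n_{4,5}$; these should be exactly the first two displayed equations once one plugs in the explicit roots of unity and simplifies.

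Next I would apply the topological Lefschetz fixed-point formula to $\sigma$,
\[
\chi(\Fix(\sigma))=\sum_{j=0}^{4}(-1)^j\tr\bigl(\sigma^*\mid H^j(X,\IR)\bigr)=2+\tr\bigl(\sigma^*\mid H^2(X,\IR)\bigr),
\]
where the right-hand side is $2+(r-l)$ because the non-real eigenvalues $i,-i,\zeta_8,\zeta_8^3,\zeta_8^5,\zeta_8^7$ come in conjugate pairs and cancel in the trace. On the left, $\chi(\Fix(\sigma))=N+\sum_{K}\chi(K)=N+2\alpha$ since each fixed curve $K$ has $\chi(K)=2(1-g(K))$ and isolated points contribute $1$ each. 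Equating gives $N=2+(r-l)-2\alpha$, the third relation.

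Finally I need the qualitative statements: that $\Fix(\sigma)$ is nonempty, is not a union of two elliptic curves, and that $N\geq 2$. Here I would argue as follows. Since $\sigma^4$ is a non-symplectic involution, Theorem \ref{nikulin} gives that $\Fix(\sigma^4)$ is nonempty (it cannot be the empty case $U(2)\oplus E_8(2)$, because by hypothesis $S(\sigma^4)=\Pic(X)$ contains a hyperbolic $U$, or more simply because one checks the empty case is incompatible with having an order-$8$ lift). Then $\sigma$ acts on the $1$-dimensional (or $2$-dimensional) pieces of $\Fix(\sigma^4)$, and a nontrivial finite-order automorphism of a curve always has fixed points, while on two disjoint curves swapped or preserved by $\sigma$ one still produces fixed points of $\sigma$ on $X$; combined with the fact that an order-$8$ action cannot restrict to the identity on any component of $\Fix(\sigma^4)$ that it preserves (else $\sigma$ would fix that curve pointwise, handled separately), this forces $\Fix(\sigma)\neq\emptyset$. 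The bound $N\geq 2$ then follows by feeding the positivity of the $n_{t,s}$ and of $\alpha$ back into the three relations, or directly: the first relation already forces $n_{2,7}+n_{3,6}\geq 2$, and any such point is isolated, hence $N\geq n_{2,7}+n_{3,6}\geq 2$; similarly the impossibility of two elliptic curves with no isolated points follows because two elliptic curves would give $\alpha=0$ and then $n_{2,7}+n_{3,6}=2>0$, contradicting $N=0$.

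I expect the main obstacle to be the bookkeeping in the holomorphic Lefschetz formula — correctly identifying which local matrix $A_{t,s}$ governs each isolated point, summing the geometric-series contributions $\frac{1}{(1-\lambda_1)(1-\lambda_2)}$ over all four types, and showing the real and imaginary parts collapse to precisely the two stated affine-linear relations rather than something messier; the rest (topological Lefschetz, and the nonemptiness argument via $\Fix(\sigma^4)$) is comparatively routine.
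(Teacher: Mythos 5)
Your proposal follows essentially the same route as the paper: the holomorphic Atiyah--Singer Lefschetz formula applied to $L(\sigma)=1+\zeta_8^7$ gives the two relations among $n_{2,7},n_{3,6},n_{4,5}$ and $\alpha$, and the topological Lefschetz formula gives $N=2+r-l-2\alpha$; the qualitative statements then follow because $L(\sigma)\neq 0$ forces $n_{2,7}+n_{3,6}=2+4\alpha\geq 2$, so the fixed locus is nonempty, contains at least two isolated points, and in particular cannot be two elliptic curves (which would give $\alpha=0$ and $N=0$). This closing argument of yours is correct and is exactly what the paper does.

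One warning about your middle paragraph: the detour through $\Fix(\sigma^4)$ is both unnecessary and unsound as written. The claim that ``a nontrivial finite-order automorphism of a curve always has fixed points'' is false for an elliptic curve, where $\sigma$ may act as a fixed-point-free translation (the paper explicitly allows this in Remark \ref{elliptic}); likewise $\sigma$ may swap two disjoint components of $\Fix(\sigma^4)$ without fixing any point on them, and the hypothesis $S(\sigma^4)=\Pic(X)$ you invoke is not part of this proposition. None of this damages the proof, since your alternative argument via the Lefschetz relations already establishes everything, but that paragraph should be deleted rather than repaired.
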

\begin{proof}
Let $\sigma$ be a purely non-symplectic automorphism of order 8, then $\sigma^\ast(\omega_{X})=\zeta_{8} \omega_{X}$ where $\zeta_{8}=e^\frac{2\pi i}{8}$. The Lefschetz number of $\sigma$ is :
\begin{eqnarray*}
L(\sigma)& = &\sum_{j=0}^{2}(-1)^j(\tr(\sigma^\ast|H^j(X,\mathcal{O}_{X}))=1+\zeta_{8}^{7},  
\end{eqnarray*}
By \cite[Theorem 4.6]{Atiyah Singer} we have also
$$
L(\sigma)=\sum_{t,s} \frac{n_{t,s}}{\det(I-\sigma^*|T_x)}+\frac{1+\zeta_{8}}{(1-\zeta_{8})^2}\sum_{C\subset \textrm{Fix}(\sigma)} (1-g(C))
$$
where $T_x$ denotes the tangent space at an isolated fixed point $x$. Comparing the two expressions for $L(\sigma)$ and since 
$L(\sigma)\neq 0$ the fixed locus is never empty nor it can be the union of two elliptic curves. Then using the expression for the local action
of $\sigma$ at a fixed point we get the equations:

\begin{eqnarray}\label{equation1}
\left\{
 \begin{array}{r c l}
n_{2,7} + n_{3,6}& = &2+4\alpha .\\
n_{4,5}+n_{2,7}-n_{3,6}& = & 2+2\alpha.
\end{array}
\right.
\end{eqnarray}
Since $\alpha\geq 0$ we observe that $\sigma$ fixes at least two isolated points.
We consider now the topological Lefschetz fixed point formula for $\sigma$. Recall that we write $r=r_\sigma$ and $l=l_\sigma$.
We have
$$
\begin{array}{r l}

 N+\sum_{K\subset\Fix(\sigma)}(2-2g(K))&=\chi(\textrm{Fix}(\sigma))=\sum_{h=0}^{4}(-1)^h \textrm{tr}(\sigma^*|H^h(X,\mathbb{R}))\\&=2+\textrm{tr}(\sigma^*|H^2(X,\mathbb{R})).
\end{array}
$$
%$$
%N+\sum_{K\subset\Fix(\sigma)}(2-2g(K))=\chi(\Fix(\sigma))=\sum_{h=0}^{4}(-1)^h \tr(\sigma^*|H^h(X,\IR))=2+\tr(\sigma^*|H^2(X,\IR))
%$$
This gives $N+2\alpha=\chi(\textrm{Fix}(\sigma))=r-l+2$, so that $r-l=N+2\alpha-2$ and we have the third relation in the statement.
\end{proof}
\begin{remark}\label{Remark3}
  The isolated fixed points, by a non-symplectic order $eight$ automorphism $\sigma$  of type $(2,7)$ and $(3,6)$ are also  isolated fixed points in $\Fix(\sigma^2)$. The points of type $(4,5)$ in $\Fix(\sigma)$ are contained in a smooth fixed curve by $\sigma^2$.
In fact the action of $\sigma^2$ at such points is given by the matrix 
 $\left(
  \begin{array}{ c c }
     1 & 0 \\
     0 & \zeta_{8}^{2}
  \end{array} \right)$ which implies that these points belong to a smooth curve in $\rm{Fix(\sigma^2)}$.
 \end{remark}
From now on we  denote by $n_t=n_{t,s}$ the number of  isolated fixed points by $\sigma$  of type $(t,s)$, where $t=2,3,4$ and $t+s=9$. Recall the usefull lemma.\\

\begin{lemma}\label{Lemma 4}\cite[Lemma 8.1]{jimmy},\cite{ASorder4}
 Let $\tau=\sum_i R_i$ be a tree of smooth rational curves on a $K3$ surface $X$ such that each $R_i$ is invariant under the action of a purely non-symplectic automorphism $\sigma$ of order $q$. Then, the 
points of intersection of the rational curves $R_i$ are fixed by $\sigma$ and the action at one fixed point determines the action on the whole tree.
\end{lemma}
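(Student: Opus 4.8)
The plan is to reduce everything to the local picture at a single intersection point and then propagate along the tree. First I would observe that each $R_i \cong \mathbb{P}^1$ is $\sigma$-invariant, so $\sigma$ restricts to an automorphism of $R_i$; since $R_i$ meets at least one other component of the tree (assuming the tree has more than one vertex), and such intersection points are the only distinguished points on $R_i$ forced by the incidence structure, I claim these intersection points must be fixed by $\sigma$. The key input is that an automorphism of $\mathbb{P}^1$ that is not the identity fixes exactly two points; if $p = R_i \cap R_j$ and $\sigma$ moved $p$, then $\sigma(p) = \sigma(R_i)\cap\sigma(R_j) = R_i\cap R_j = p$ since the two curves meet transversally in a single point (they lie on a K3 surface and form a tree of smooth rational curves, hence $R_i\cdot R_j = 1$ when they meet). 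So in fact every node of the tree is automatically fixed.

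Next I would set up the local analytic coordinates at such a fixed node $p = R_i\cap R_j$. Because $R_i$ and $R_j$ are smooth and meet transversally, we can choose local coordinates $(x,y)$ at $p$ so that $R_i = \{y=0\}$ and $R_j = \{x=0\}$, and both axes are $\sigma$-invariant. Hence the linearization of $\sigma$ at $p$ is diagonal in these coordinates, say $\operatorname{diag}(\zeta_q^{a}, \zeta_q^{b})$ for some $a,b$; the action on the tangent line to $R_i$ is multiplication by $\zeta_q^{a}$ and on the tangent line to $R_j$ by $\zeta_q^{b}$. This is exactly the statement that "the action at one fixed point" — i.e. the pair $(a,b)$ — is the datum we want to show determines everything.

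Then I would propagate: the restriction $\sigma|_{R_i}$ is an automorphism of $\mathbb{P}^1$ fixing $p$ and acting on $T_pR_i$ by $\zeta_q^{a}$; this pins down $\sigma|_{R_i}$ completely (an automorphism of $\mathbb{P}^1$ fixing a point with a prescribed derivative there is unique — in an affine coordinate $u$ centered at $p$ it is $u\mapsto \zeta_q^{a}u$, up to the choice of the second fixed point, but the weight $\zeta_q^a$ at $p$ together with the order forces the linear-fractional map, and in any case what matters for the next node is the weight at the other fixed point, which is $\zeta_q^{-a}$). So at every other fixed point lying on $R_i$ the weight of $\sigma$ along $R_i$ is determined, and in particular at any further node $p' = R_i\cap R_k$ the weight along $R_i$ is $\zeta_q^{-a}$; transversality again makes the local action at $p'$ diagonal, so the weight along $R_k$ is determined once we know the local action is, say, one of the finitely many possibilities $A_{t,s}$ — but actually the two weights at a node of a fixed tree are constrained (their product governs whether the point is isolated, and on a K3 the determinant of the local action is $\zeta_q$ since $\sigma$ acts on the holomorphic $2$-form by $\zeta_q$), so knowing one weight determines the other. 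Walking along the tree, which is connected, one then determines the full collection of weights from the initial datum at one node. The main obstacle to write carefully is the bookkeeping in this propagation step — making precise that "weight at the far end of $R_i$" equals "inverse weight at the near end" and that on a K3 the product of the two local weights at any fixed point equals $\zeta_q$ (which is where the non-symplectic hypothesis enters) — but this is exactly the content of the cited references \cite[Lemma 8.1]{jimmy} and \cite{ASorder4}, and no genuinely new argument is needed.
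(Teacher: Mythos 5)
Your argument is correct and is essentially the standard proof (the paper itself only cites \cite{jimmy} and \cite{ASorder4} for this lemma, and your reasoning is exactly the one used there and implicitly in Remark \ref{localact}): intersection points are fixed because $\sigma(R_i)\cap\sigma(R_j)=R_i\cap R_j$ is a single transversal point, the local action at such a point is diagonal with eigenvalue product $\zeta_q$ by non-symplecticity, the weight at the opposite fixed point of $\sigma|_{R_i}$ is the inverse of the weight at $p$, and these two facts propagate the data along the connected tree. The only cosmetic quibble is that your ``if $\sigma$ moved $p$'' framing is superfluous, since the displayed equality already shows $\sigma(p)=p$ directly.
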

\begin{remark} \label{localact}
 In the case of an automorphism of order 8, with the assumptions of Lemma \ref{Lemma 4}, the local actions at the intersection points of the curves $R_i$ appear in the following order (we give only the 
exponents of $\zeta_8$ in the matrix of the local action):
$$\ldots,(0,1),(7,2),(6,3),(5,4),(4,5),(3,6),(2,7),(1,0),\ldots$$
Assuming that $\tau=R$ consists only of one rational curve, which is not pointwise fixed and do not intersect a fixed curve of higher genus,  one  get immediately that $\sigma$ has either one fixed point of type $(2,7)$ and another one of type $(3,6)$, two fixed  points of type $(4,5)$ or one fixed point of type $(4,5)$ and one of type $(3,6)$.  
\end{remark}
%%%%%%%%%%%%%%%%%%%%%%%%%%%%%%%%%%%%%%%%%%%%%%%%%%%%%%
\begin{pro}\label{Le2}
Let $\sigma$ be a non-symplectic automorphism of order 8 on a $K3$ surface $X$. Assume that $\Pic(X)=S(\sigma^4)$ and $C\subset \Fix(\sigma)$ with $g(C)=1$. Then the following relations hold:
$$
\begin{array}{c c l}
 
N_{\sigma^2}&=&2k_{\sigma^2} +4,\\
4k_{\sigma^2}&=&r_{\sigma^2}-l_{\sigma^2}-2=2(10-l_{\sigma^2}-m_{\sigma^2}).
\end{array}
$$
\end{pro}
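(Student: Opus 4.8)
The plan is to apply the holomorphic and topological Lefschetz fixed point formulas to the order four automorphism $\sigma^2$, in the same spirit as in Proposition \ref{proposition1}, and to combine them with Theorem \ref{nikulin} applied to the non-symplectic involution $\sigma^4$.

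First I would record the local picture for $\sigma^2$, which is purely non-symplectic of order four with $(\sigma^2)^\ast\omega_X=i\,\omega_X$. Squaring the normal forms $A_{t,s}$ as in Remark \ref{Remark3} shows that at every isolated point of $\Fix(\sigma^2)$ the local action is $\diag(-1,-i)$, while along every curve $K\subseteq\Fix(\sigma^2)$ the eigenvalue on the normal bundle is $i$; as $K^2=2g(K)-2$ on a K3 surface, the holomorphic Lefschetz fixed point formula for $\sigma^2$ (computed exactly as in the proof of Proposition \ref{proposition1}) becomes
$$1-i \;=\; \frac{N_{\sigma^2}}{(1+1)(1+i)} \;+\; \frac{1+i}{(1-i)^2}\sum_{K\subseteq\Fix(\sigma^2)}\bigl(1-g(K)\bigr).$$
Writing $\alpha_{\sigma^2}=\sum_{K\subseteq\Fix(\sigma^2)}(1-g(K))$ and simplifying (using $(1-i)^2=-2i$), this collapses to $N_{\sigma^2}=4+2\alpha_{\sigma^2}$.

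The next step is to identify $\alpha_{\sigma^2}$ with $k_{\sigma^2}$. Since $\Fix(\sigma^2)\subseteq\Fix(\sigma^4)$ and $C\subseteq\Fix(\sigma^4)$ with $g(C)=1$, Theorem \ref{nikulin} forces $\Fix(\sigma^4)$ to be either the disjoint union of two elliptic curves --- then there are no rational fixed curves and $k_{\sigma^2}=\alpha_{\sigma^2}=0$ --- or the disjoint union of $C$ with finitely many smooth rational curves. In either case every component of $\Fix(\sigma^2)$ other than $C$ is rational, so $\alpha_{\sigma^2}=(1-g(C))+k_{\sigma^2}=k_{\sigma^2}$, which gives the first relation $N_{\sigma^2}=2k_{\sigma^2}+4$. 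For the second relation I would run the topological Lefschetz formula for $\sigma^2$: the eigenvalues $i$ and $-i$ of $(\sigma^2)^\ast$ on $H^2(X,\IR)$ have the same multiplicity $m_{\sigma^2}$, so their contributions to the trace cancel and $\chi(\Fix(\sigma^2))=2+\tr((\sigma^2)^\ast|H^2(X,\IR))=2+r_{\sigma^2}-l_{\sigma^2}$. Comparing this with $\chi(\Fix(\sigma^2))=N_{\sigma^2}+2\alpha_{\sigma^2}=4+4k_{\sigma^2}$ yields $4k_{\sigma^2}=r_{\sigma^2}-l_{\sigma^2}-2$. Finally, the eigenvalues $1,-1,i,-i$ of $(\sigma^2)^\ast$ on the $22$-dimensional space $H^2(X,\IC)$ occur with multiplicities $r_{\sigma^2},l_{\sigma^2},m_{\sigma^2},m_{\sigma^2}$, so $r_{\sigma^2}+l_{\sigma^2}+2m_{\sigma^2}=22$, and therefore $r_{\sigma^2}-l_{\sigma^2}-2=2(10-l_{\sigma^2}-m_{\sigma^2})$.

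I expect the only genuinely delicate point to be the identification $\alpha_{\sigma^2}=k_{\sigma^2}$: one must make sure no second curve of positive genus can lie inside $\Fix(\sigma^2)$ and must correctly treat the degenerate case in which $\sigma^4$ fixes two elliptic curves. Everything else is the Lefschetz bookkeeping of Proposition \ref{proposition1}, transplanted from $\sigma$ to $\sigma^2$.
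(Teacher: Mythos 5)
Your proposal is correct and follows exactly the route the paper takes: note that $\Fix(\sigma^2)$ consists of the elliptic curve $C$, rational curves and isolated points with local action $\diag(-1,-i)$, then apply the holomorphic and topological Lefschetz formulas to $\sigma^2$ together with $r_{\sigma^2}+l_{\sigma^2}+2m_{\sigma^2}=22$. The paper's proof is just a terse version of this; your write-up supplies the bookkeeping (including the identification $\alpha_{\sigma^2}=k_{\sigma^2}$ via Theorem \ref{nikulin}) correctly.
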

\begin{proof}
 Observe that the fixed locus of $\sigma^2$ is the disjoint union of $k_{\sigma^2}$ smooth rational curves, a smooth elliptic curve and $N_{\sigma^2}$ isolated points. Moreover we have that an isolated fixed point by 
$\sigma^2$ is given by the local action  $\left(
  \begin{array}{ c c }
     -i & 0 \\
     0 & -1
  \end{array} \right)$. We obtain the relations in the statement by applying holomorphic and topological Lefschetz's formulas.
\end{proof}
%%%%%%%%%%%%%%%%%%%%%%%%%%%
%%%%%%%%%%%%%%%%%%%%%%%%%%%
\section{The classification}
 \begin{lemma}\label{8basis}
 If a K3 surface $X$ carries a $\sigma$-invariant elliptic fibration, such that 
 %the action of $\sigma$ on the basis of the fibration is non trivial and 
 $\sigma^4$ fixes an irreducible smooth fiber $C$ of this fibration, then $\sigma$ acts with order 8 on the basis of the fibration and fixes two points on it.
 \end{lemma}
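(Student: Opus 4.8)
The plan is to control the induced automorphism $\bar\sigma$ of the base $\mathbb{P}^1$ by comparing the action of $\sigma$ on $\omega_X$ near $C$ with the action of $\bar\sigma$ near the point $b_0:=\pi(C)$. First I would set up the local picture: choose a small disk $D\ni b_0$ over which every fiber is smooth and a coordinate $t$ with $t(b_0)=0$, and write $\omega_X=\pi^*(dt)\wedge\eta$ on $\pi^{-1}(D)$ for some local $1$-form $\eta$; since $\omega_X$ is nowhere zero and $\pi$ is a submersion over $D$, the restriction of $\eta$ to each fiber $C_b$ is a nowhere-vanishing holomorphic $1$-form, so in particular $\eta|_C\neq 0$ generates $H^0(C,\Omega^1_C)$. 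The subtle point is that $(\pi^*dt)|_C=0$, so restricting $\sigma^*\omega_X=\zeta_8\,\omega_X$ directly to $C$ is vacuous; instead one first cancels the factor $\pi^*dt$ on the submersion locus. Using $\sigma^*(\pi^*dt)=\pi^*(\bar\sigma^*dt)$ together with the fact that, if $\bar\sigma^j$ fixes $b_0$, its coordinate expression near $b_0$ has the form $t\mapsto c_j t+O(t^2)$ with $c_j\ne 0$, one obtains the key identity: whenever $\bar\sigma^j$ fixes $b_0$ (equivalently $\sigma^j(C)=C$), the automorphism $\sigma^j|_C$ multiplies $\eta|_C$ by $\zeta_8^{j}/c_j$, where $c_j$ is the multiplier of $\bar\sigma^j$ at $b_0$ (so $c_1$ is the multiplier of $\bar\sigma$).

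Next I would determine the $\bar\sigma$-orbit of $b_0$. Since $\sigma^4$ fixes $C$ pointwise we have $\sigma^4(C)=C$, hence $\bar\sigma^4(b_0)=b_0$ and the orbit of $b_0$ has size dividing $4$. For any $k$, $\sigma^k(C)$ is the fiber over $\bar\sigma^k(b_0)$ and is again pointwise fixed by $\sigma^4$ (because $\sigma^4$ commutes with $\sigma^k$), and distinct points of the orbit give pairwise disjoint smooth curves of genus $1$ contained in $\Fix(\sigma^4)$. By Theorem~\ref{nikulin}, $\Fix(\sigma^4)$ contains at most two elliptic curves, so the orbit of $b_0$ has size $1$ or $2$.

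Then the two cases. If the orbit has size $1$, then $\sigma(C)=C$, so $(\sigma|_C)^4=\sigma^4|_C=\id$ and the scalar $\mu$ by which $\sigma|_C$ acts on the line $H^0(C,\Omega^1_C)$ satisfies $\mu^4=1$; combined with $c_1\mu=\zeta_8$ this gives $c_1^4=\zeta_8^4=-1$, so the multiplier $c_1$ of $\bar\sigma$ at $b_0$ is a primitive $8$th root of unity, whence $\bar\sigma$ is conjugate in $\mathrm{PGL}_2(\mathbb{C})$ to $z\mapsto\zeta_8 z$ and has order $8$. If the orbit has size $2$, then $\bar\sigma^2$ fixes $b_0$; applying the same computation to $\sigma^2$ (which multiplies $\omega_X$ by $\zeta_8^2=i$) and using $(\sigma^2|_C)^2=\sigma^4|_C=\id$, so that $\sigma^2|_C$ acts on $H^0(C,\Omega^1_C)$ by a scalar $\nu$ with $\nu^2=1$, we get $c_2\nu=i$ and hence $c_2^2=-1$; thus $\bar\sigma^2$ has order $4$, and since $\mathrm{ord}(\bar\sigma)$ divides $8$ this forces $\mathrm{ord}(\bar\sigma)=8$. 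In either case $\bar\sigma$ has order $8$; since a finite-order automorphism of $\mathbb{P}^1$ other than the identity is conjugate to some $z\mapsto\lambda z$, it fixes exactly two points of the base, which completes the proof.

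I expect the local computation of the first paragraph to be the main obstacle: one must extract the ``transverse'' content of the identity $\sigma^*\omega_X=\zeta_8\omega_X$ along $C$ — restricting it naively to the curve $C$ only yields $0=0$ since $\omega_X|_C=0$ — which amounts to passing to the relative $1$-form $\omega_X/\pi^*(dt)$ and verifying that it restricts to a generator of $H^0(C,\Omega^1_C)$ on each fiber. Once that is in place, the remainder is a short orbit count using Theorem~\ref{nikulin} and an elementary fact about automorphisms of $\mathbb{P}^1$.
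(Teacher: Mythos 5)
Your argument is correct, but it takes a noticeably longer route than the paper's. The paper simply observes that $\sigma^4$ fixes $C$ \emph{pointwise}, so at any point $p\in C$ the differential of $\sigma^4$ is $\diag(1,\lambda)$ with eigenvalue $1$ along the fiber and transverse eigenvalue $\lambda=\det(d\sigma^4_p)=\zeta_8^4=-1$; since the transverse direction maps isomorphically onto $T_{\pi(p)}\IP^1$, this $-1$ is exactly the multiplier of $\bar\sigma^4$ at $\pi(C)$, so $\bar\sigma^4\neq\id$, hence $\bar\sigma$ has order $8$ and, being a nontrivial finite-order automorphism of $\IP^1$, has exactly two fixed points. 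Your key identity (the multiplier of $\bar\sigma^j$ at $b_0$ times the scalar by which $\sigma^j|_C$ acts on $H^0(C,\Omega^1_C)$ equals $\zeta_8^j$) is the same principle, but you apply it to $j=1$ and $j=2$; since these powers need not preserve $C$, you are forced into the orbit analysis and the appeal to Theorem \ref{nikulin}, none of which the paper needs. Applying your own identity with $j=4$ (where $\sigma^4|_C=\id$, so the fiber scalar is $1$) gives $c_4=\zeta_8^4=-1$ at once and collapses the whole argument to the paper's one-liner. A minor remark: your second case (orbit of $b_0$ of size $2$) is in fact vacuous, since an order-$8$ automorphism of $\IP^1$ is conjugate to $z\mapsto\zeta_8 z$ and has no orbit of size $2$; deriving $\mathrm{ord}(\bar\sigma)=8$ there is really a proof that the case does not occur, which is logically harmless but worth noticing.
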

 \begin{proof}
 Let $\pi_{C}:X \longrightarrow \mathbb{P}^1$ be a $\sigma-$invariant elliptic fibration having $C$ as a smooth fiber and such that $C$ is fixed by the involution $\sigma^4$. Observe that $\sigma^2$ 
(respectively~$\sigma^4$) is not the identity on the basis of $\pi_C$, since otherwise it would act as the identity on the tangent space at a point of $C$, 
contradicting the fact that $\sigma^2$ (respectively $\sigma^4$) is purely non-symplectic. Hence $\sigma$ acts as an order eight automorphism on $\mathbb{P}^1$ and has two fixed points on it corresponding to $C$ and
 another fiber $C'.$
 \end{proof}
\begin{remark}\label{elliptic}
 Let $C$ be an elliptic curve fixed by the involution $\sigma^4$. Then $C$ is also invariant by $\sigma^i$ for $i=1,2$. And $\sigma$ either fixes $C$ or it acts on $C$ as a translation with no-fixed points
or $\sigma$ preserves $C$ and fixes isolated points on it. In this last case we have two possibilities:
\begin{itemize}
 \item[a)] The automorphism $\sigma$ acts on $C$ as an automorphism of order four with two isolated fixed points, that must be of type $(2,7)$ and/or $(3,6)$. In fact $\sigma$ can not have a fixed point of type $(4,5)$ on $C$ otherwise this point would be contained
on a fixed curve for $\sigma^2$, that would be also fixed by $\sigma^4$, but $\sigma^4$ already fixes $C$ and fixed curves do not intersect. 

\item[b)] The automorphism $\sigma$ acts on $C$ as an involution with four isolated fixed points. These must be clearly of type $(4,5)$. 
\end{itemize}
\end{remark}
\begin{theorem}\label{thmelliptic}
 Let $\sigma$ be a purely non-symplectic automorphism of order 8 on a $K3$ surface $X$, such that the involution $\sigma^4$ fixes a smooth elliptic curve. Then the number $\# C$ of  fixed smooth elliptic curves by 
$\sigma^4$ is at most $2$. Moreover, the action of $\sigma$ is described in the  Table \ref{table elliptic}.
\end{theorem}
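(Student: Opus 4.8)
The statement splits into the bound $\#C\le 2$ and the filling-in of Table~\ref{table elliptic}; only the first part is short. Applying Theorem~\ref{nikulin} to the non-symplectic involution $\sigma^4$, its fixed locus is either a pair of elliptic curves or one smooth curve of genus $g\ge 0$ together with finitely many smooth rational curves; since rational curves are not elliptic, at most two fixed components are elliptic, and by hypothesis at least one is, so $\#C\in\{1,2\}$. I would also fix, once and for all, the geometric set-up used throughout. A smooth genus-$1$ curve on a $K3$ has $C^2=0$, so $|C|$ defines an elliptic fibration $\pi\colon X\to\mathbb{P}^1$ with $C$ a smooth fiber; $\pi$ is $\sigma$-equivariant because $\sigma$ permutes the finitely many (by the bound just proved) elliptic curves fixed pointwise by $\sigma^4$, all of which are fibers of $\pi$, hence of class $[C]$. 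A short orbit argument—the $\langle\sigma\rangle$-orbit of $C$ has size dividing $4$, size $>1$ would exhibit several disjoint elliptic fibers fixed pointwise by $\sigma^4$, so by the bound the orbit has size $\le 2$, and size $2$ would make $[C]$ a $\sigma^2$-fixed point of $\mathbb{P}^1$, hence a $\sigma$-fixed point since an order-$8$ and an order-$4$ automorphism of $\mathbb{P}^1$ share their two fixed points, contradicting size $2$—shows $\sigma(C)=C$. Thus Lemma~\ref{8basis} applies and $C$, together with a second fiber $C'$, are the two fibers over the $\sigma$-fixed points of $\mathbb{P}^1$; in the two-curve case $C'$ is the second $\sigma^4$-fixed elliptic curve.

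For the numerical backbone I would use that $S(\sigma^4)$ is $2$-elementary. Writing $\det S(\sigma^4)=\pm 2^a$, Remark~\ref{rlm} gives $\rk S(\sigma^4)=r_{\sigma^4}=r+l+2m$ and $\rk T(\sigma^4)=l_{\sigma^4}=4m_1$, so $r_{\sigma^4}=22-4m_1$. Theorem~\ref{nikulin} then forces either $S(\sigma^4)\cong U\oplus E_8(2)$ (the two-elliptic-curve case, $\#C=2$, $r_{\sigma^4}=10$, $m_1=3$) or, with $g(C)=1$, the relations $2=22-r_{\sigma^4}-a$ and $2k_{\sigma^4}=r_{\sigma^4}-a$, whence $a=4m_1-2$ and $k_{\sigma^4}=12-4m_1$; nonnegativity of $k_{\sigma^4}$ restricts $m_1\in\{1,2,3\}$ and $\rk\Pic(X)=r_{\sigma^4}\in\{18,14,10\}$, the three Picard ranks of Theorem~\ref{intro theorem1}.

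The core of the argument is then a finite case analysis organised by the action of $\langle\sigma\rangle$ on $C$ (and on $C'$ when $\#C=2$). By Remark~\ref{elliptic} the possibilities for $\sigma|_C$ are: $\sigma$ fixes $C$ pointwise; $\sigma|_C$ is a fixed-point-free translation; $\sigma|_C$ has order $4$ with two isolated fixed points of type $(2,7)$ and/or $(3,6)$; or $\sigma|_C$ is an involution with four isolated fixed points of type $(4,5)$—so $C$ lies in $\Fix(\sigma)$, in $\Fix(\sigma^2)$, or only in $\Fix(\sigma^4)$, which is the trichotomy of Theorem~\ref{intro theorem1}. In each configuration I would combine: the holomorphic and topological Lefschetz formulas for $\sigma$ (Proposition~\ref{proposition1}), relating $n_2,n_3,n_4$, $r$, $l$, $\alpha$; the data for the order-$4$ automorphism $\sigma^2$ coming from the classification of \cite{ASorder4} and, when $C\subset\Fix(\sigma^2)$, Proposition~\ref{Le2}, which pin down $N_{\sigma^2},k_{\sigma^2}$ in terms of $m,m_1$; the lattice data for $\sigma^4$ obtained above; and Remark~\ref{rlm} to pass between the $\sigma^j$-invariants and $(r,l,m,m_1)$. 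Each case then reduces to a small linear Diophantine system whose solutions give the admissible $(k,N,\rk\Pic(X))$ and point-type vectors. For the fixed rational curves and the reducible fibers of $\pi$ one reads off the local actions along chains of invariant rational curves using Lemma~\ref{Lemma 4} and Remark~\ref{localact}, together with the disjointness of the fixed curves of $\sigma$, $\sigma^2$, $\sigma^4$; this both fixes the ``action on the fibers'' column of the table and eliminates spurious solutions.

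The main obstacle is exactly this bookkeeping: several of the naive numerical systems admit more solutions than survive geometrically, so the delicate step is the elimination—ruling out point configurations incompatible with the local-action patterns of Remark~\ref{localact}, with the disjointness of the various fixed curves, and, in the two-elliptic-curve case, with the requirement that $\sigma$ act compatibly on $C'$ as well; one must also check that the induced order-$4$ and order-$2$ data actually occur in the classifications of \cite{ASorder4} and Theorem~\ref{nikulin}. Collecting the cases that pass all these tests yields precisely the sixteen rows of Table~\ref{table elliptic}.
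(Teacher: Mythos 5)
Your argument for the first assertion is complete and correct, and it follows the paper's route: Theorem~\ref{nikulin} applied to $\sigma^4$ gives $\#C\le 2$ immediately, and your derivation of $r_{\sigma^4}=22-4m_1$, $a=4m_1-2$, $k_{\sigma^4}=12-4m_1$, hence $(\rk\Pic(X),\#C,k_{\sigma^4})\in\{(10,2,0),(14,1,4),(18,1,8)\}$, reproduces the paper's equation~\eqref{cases} (which the paper partly delegates to \cite{ast}). Your orbit argument showing $\sigma(C)=C$ before invoking Lemma~\ref{8basis} is actually more careful than the paper, which simply asserts that $\sigma$ preserves $C$; that is a worthwhile addition.

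However, for the second assertion --- that the action of $\sigma$ is exactly as in the sixteen rows of Table~\ref{table elliptic} --- what you give is a plan, not a proof, and for a classification theorem the case analysis \emph{is} the content. Two concrete steps are missing. First, you never determine the Kodaira type of the reducible fiber $C'$: the paper shows that all $k_{\sigma^4}$ rational curves fixed by $\sigma^4$ sit in the single fiber $C'$, and that since a non-symplectic involution fixes no isolated points and its fixed curves are disjoint, $C'$ must be of type $IV^*$ or $I_8$ when $k_{\sigma^4}=4$ and of type $I_{16}$ when $k_{\sigma^4}=8$. This is the step that makes the enumeration finite, and your proposal does not supply it. Second, you never enumerate the possible symmetries of $\sigma$ on these fibers (preserving each component, reflection, rotation of order $2$ or $4$ on $I_n$; preserving components or swapping the branches of $IV^*$) and then run Remark~\ref{localact} along the chains to extract $(n_2,n_3,n_4)$, $k$, $k_{\sigma^2}$ and, via Propositions~\ref{proposition1} and~\ref{Le2} and Remark~\ref{rlm}, the invariants $(r,l,m)$ row by row. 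Saying that ``each case reduces to a small linear Diophantine system'' and that the surviving solutions ``yield precisely the sixteen rows'' asserts the conclusion rather than deriving it; as written, one cannot check from your text that, say, case 11 has $(n_2,n_3,n_4)=(3,3,4)$ and $(r,l,m)=(10,0,2)$, or that no seventeenth row occurs. The strategy is the right one --- it is the paper's --- but the table itself remains unproved in your write-up.
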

\begin{proof}
Observe first that $\rk\Pic(X) \in \{ 18,14,10,6,2\}$ since $\varphi(8) | \rk T_X$ by \cite{Nikulin1}. The involution $\sigma^4$ fixes an elliptic curve by Theorem \ref{nikulin} 
(see also  \cite[Figure 1]{ast})
so that we get $\textrm{rk~Pic}(X)\in \{10,14,18\}$. On the other hand, by Theorem \ref{nikulin} again we have that $\sigma^4$ fixes at most two elliptic curves, moreover we know that for the number $\# C$ of smooth fixed 
elliptic curves by $\sigma^4$ and the number $k_{\sigma^4}$ of rational fixed curves holds:
\begin{equation}\label{cases}
 (\rk \Pic (X),\# C, k_{\sigma^4})=(10,2,0),(14,1,4),(18,1,8).
\end{equation}
Let $C$ denote a $\sigma^4$-fixed smooth elliptic curve. Since $\sigma$ preserves $C$ there is a $\sigma$-invariant elliptic fibration 
$$
\pi_C:X\lra\IP^1
$$
with generic fiber $C$. Observe that by Lemma \ref{8basis} the automorphism $\sigma$ has order eight on the basis $\IP^1$ and it fixes two points  corresponding to the fiber $C$ and a fiber $C'$. The fiber $C'$ can be smooth elliptic and so $k_{\sigma}=0$ or it can be reducible and then it contains all rational curves fixed by $\sigma^4$. In 
the case $(\rk \Pic (X),\# C, k_{\sigma^4})=(14,1,4)$, the curve $C'$ is a reducible fiber containing four smooth rational curves fixed by $\sigma^4$. Since the non-symplectic involution $\sigma^4$ does not fix isolated points and the fixed curves by $\sigma^4$
 do not intersect, one can easily see that the reducible fiber $C'$ is either of Kodaira type $IV^*$ or $I_8$. By the same argument we get that $C'$ is of type $I_{16}$ in the case $(\rk \Pic (X),\# C, k_{\sigma^4})=(18,1,8)$ (see also \cite[Theorem 3.1, Theorem 8.4]{ASorder4}).

 \textbf{\underline{The case $\rk \Pic(X)=10$}}. Since $k_{\sigma^4}=0$ we have also $k=0$, thus $n_2+n_3=2$ by Proposition \ref{proposition1}. Observe that $\sigma$ acts as an automorphism of order four on one of the two
$\sigma^4$-fixed elliptic curves $C,C'$, in fact $\sigma^2$ has order four and can not fix two elliptic curves, see \cite[Proposition 1]{ASorder4}. On the other hand  either $\sigma$  acts on the other $\sigma^4-$fixed elliptic curve 
as a translation (of order 2 or 4, recall that the fixed locus of $\sigma^4$ consists of two smooth eliptic curves) or it is the identity, in this case $n_4=0$ so that $(n_2,n_3,n_4)=(2,0,0)$ or it acts as an involution when  $(n_2,n_3,n_4)=(0,2,4)$ (see Proposition \ref{proposition1} and Remark \ref{elliptic}). Since $\rk\Pic(X)=10$ we have that $\rk T_X=12$ so that $m_1=3$. This gives $m_{\sigma^2}=6$ by Remark \ref{rlm}. Since $k_{\sigma^2}=0$ using Proposition 
\ref{Le2} we get that $10-l_{\sigma^2}-m_{\sigma^2}=0$, so we have $l_{\sigma^2}=2m=4$, $m=2$ and $r_{\sigma^2}=6$ by Proposition \ref{Le2} again. By Proposition \ref{proposition1}
and Remark \ref{rlm} one can find easily that $(r,l,m)=(3,3,2)$, respectively $(5,1,2)$. This gives the {\bf first four cases} in Table \ref{table elliptic}.

\textbf{\underline{The case $\rk \Pic(X)=14$}}. As we have already seen, the fiber $C'$ is either of type $IV^{*}$ or $I_8$. We study here these two possibilities separately.
\begin{itemize}
 \item \underline{ $C'$ of type $IV^{*}$:} Since the fiber $IV^{*}$ is preserved by $\sigma^i~;~i=1,2,4$, the automorphism $\sigma$ either preserves each component of $IV^{*}$ or it exchanges the two branches
 of $IV^{*}$. In these two cases  $\sigma^2$ preserves each component of $C'$ and fixes the central component of multiplicity 3 since it has at least three fixed points by $\sigma^2$. Then we have that 
$k \in \{ 0,1\}$ by Remark \ref{localact}. 
If $k=1$ then $\sigma$ preserves each component of $C'$, hence it fixes six points on $C'$ three of each type $(2,7)$ and $(3,6)$ by Remark \ref{localact}, thus 
$(n_2,n_3,n_4) \geq (3,3,0)$ (we write $(x_1,x_2,x_3)\geq (y_1,y_2,y_3)$ if and only if $x_i\geq y_i$ for all $i=1,2,3.$). By Proposition \ref{proposition1} we get $(n_2,n_3,n_4)=(3,3,4)$, this implies that $\sigma$ acts as an involution on $C$ and fixes four points on it of type $(4,5)$.
This is {\bf case 11} in Table \ref{table elliptic}.\\
If $k=0$  the automorphism $\sigma$ exchanges the two branches of $C'$. So the fiber $C'$ contains two fixed points of type $(4,5)$ on the central 
component of multiplicity 3, which is then fixed by $\sigma^2$, and one point of type $(2,7)$ and another one of type $(3,6)$, hence we get $(n_2,n_3,n_4) \geq (1,1,2)$.
 By Proposition \ref{proposition1}
we have that $(n_2,n_3,n_4)=(1,1,2)$ which means that  $\sigma$ acts on $C$ either as the identity or as a translation of order two or four (since $\sigma^4$ fixes $C$). These are the {\bf cases 5,6,7} in the Table \ref{table elliptic}.\\
Finally, since $k_{\sigma^2}=1$ and by doing the same computation as in the previous case one gets immediately that $(r,l,m)=(10,0,2)$ and $(6,4,2)$  respectively for $k=1$ and $k=0$.
\item \underline{ $C'$ of type $I_8$:} First observe  that all components of $C'$ are preserved by $\sigma^4$ since $\textrm{Pic}(X)= S(\sigma^4)$ and a component which is not fixed intersects two 
fixed ones. The automorphism $\sigma$ either preserves each component of $I_8$ or 
acts on it as a reflection (i.e. $\sigma$ preserves two components on $I_8$ and exchanges the remaining 6 components 
two by two) or it acts as a rotation, of order two or four. Applying Remark \ref{localact} 
one gets that in the first case $\sigma$ fixes one component of $C'$ (i.e. $k=1$) and 2 isolated points of 
each type $(2,7), (3,6)$ 
and $(4,5)$, so we get $(n_2,n_3,n_4) \geq (2,2,2)$. On the other hand, we have that $ (n_2,n_3,n_4) = (4,2,2)$ 
by Proposition \ref{proposition1},  which implies that $\sigma$ acts on $C$ as an 
automorphism of order four and  fixes two points on it of type $(2,7)$ (see Remark \ref{elliptic}). This is the {\bf case 12}
in the Table \ref{table elliptic}. 

If $\sigma$ acts on $C'$ as a reflection, then $k=0$ and $\sigma$ fixes four points of 
type $(4,5)$ on the two invariants components of $C'$. Observe that these two components are fixed by $\sigma^2$. 
Hence we get $(n_2,n_3,n_4) \geq (0,0,4)$. On the other hand, by Proposition \ref{proposition1}   we have that $(n_2,n_3,n_4) = (0,2,4)$, thus 
$\sigma$  acts as an automorphism of order four on $C$ and fixes two points of type $(3,6)$ (see Remark \ref{elliptic}). This is the {\bf case 10} in the Table \ref{table elliptic}.

Using the relation $4k_{\sigma^2}=r_{\sigma_2}-l_{\sigma^2}-2=2(10-l_{\sigma^2}-m_{\sigma^2})$ of Proposition \ref{Le2} since $k_{\sigma^2}=2$ and
$m_{\sigma^2}=4$ we get in 
these two cases that $(r_{\sigma_2},l_{\sigma^2})=(12,2)$. In fact in these two cases  $\sigma^2$ preserves each component of $I_8$ and fixes two components in $C'$.  We get then easily that  $(r,l,m)$ is equal respectively to  $(10,2,1)$ and $(8,4,1)$  by Proposition \ref{proposition1}.

 Finally, if $\sigma$ acts as a rotation of order two or of order four on $C'$, then in these both cases $k=0$ and $\sigma$ does not fix any point on $C'$.  By Proposition \ref{proposition1} we have
that $(n_2,n_3,n_4) = (2,0,0)$ and so $\sigma$ also acts as an automorphism of order four on $C$ and fixes two points of type $(2,7)$ (see Remark \ref{elliptic}). Moreover we compute as above the invariants $r, l, m$ in the case that  $k_{\sigma^2}=2$ respectively $0$ 
which correspond to the action of the automorphism on $I_8$ as a rotation  of order two or of order four respectively. We get easily that $(r,l,m)$ equals $(6,6,1)$ respectively $(4,4,3)$. These are the {\bf cases 8 and 9} in the Table \ref{table elliptic}.
\end{itemize}
 \textbf{\underline{The case $\rk \Pic(X)=18$}}. In this case the fiber $C'$ is of type $I_{16}$ as remarked at the beginning of the proof. As we have seen previously, there are four possibilities for the action of $\sigma$ on $I_{16}$ 
that are: either $\sigma$ preserves each component of $I_{16}$ or it acts as a reflection (here $\sigma$ preserves two components of $I_{16}$ and exchanges the remaining $14$ components two by two) or it acts 
as a rotation of order two, respectively four. IThese are the {\bf cases 16, 15, 13 and 14} in the Table \ref{table elliptic}. If $\sigma$ preserves each component of $C'$, then by applying Remark \ref{localact} we get that $\sigma$ fixes two components of $C'$ (i.e. $k=2$) and $12$ isolated points
four of each type $(2,7), (3,6)$ and $(4,5)$. On the other hand, if $\sigma$ acts as a reflection, then $k=0$ and $\sigma$ fixes four isolated points of type $(4,5)$
on the two preserved components of $C'$. Finally, $\sigma$ does not fix any point on $C'$ if it acts as a rotation.\\
Observe that in these three cases $\sigma$ acts as an automorphism of order four on the smooth $\sigma^4-$fixed fiber $C$ by Proposition \ref{proposition1}. Using the same argument as before we get the values of 
the invariants $r, l, m$ that appeared in  the last four cases of Table \ref{table elliptic}.
\end{proof}
%%%%%%%%%%%%%%%%%%%%%%%%%%%%%%%%%%%%%%%%%%%%%%%%%%%%%%%%%%%%%%%%%%%%%%%%%%%%%%%%%%%%%%%%%%%%%%%%%%%%%%%%%%%%%%%%%%%%%%%%%%%%%%%%%%%%%%%%%%%%%%%%%%%%%%%%%%%%%%%%%%%%%%%%%%%%%%%%%%%%%%%%%%%%%%%%%
%%%%%%%%%%%%%%%%%%%%%%%%%%%%%%%%%%%%%%%%%%%%%%%%%%%%%%%%%%%%%%%%%%%%%%%%%%%%%%%%%%%%%%%%%%%%%%%%%%%%%%%%%%%%%%%%%%%%%%%%%%%%%%%%%%%%%%%%%%%%%%%%%%%%%%%%%%%%%%%%%%%%%%%%%%%%%%%%%%%%%%%%%%%%%
\section{Examples}\label{section examples}

In this section we give examples corresponding to several cases in the classification of the non-symplectic automorphisms of order eight on elliptic $K3$ surfaces. 
\begin{example}\label{example1}
\rm{Consider the elliptic fibration $\pi_C : X\longrightarrow \mathbb{P}^1$ with a Weierstrass equation:
$$y^2= x^3 + a(t) x + b(t).$$
where~~$ a(t) =at^8 + b$ ~ and ~$b(t) = ct^8 + d$ with $a,b,c,d \in \mathbb{C}$. The fibration $\pi_C$ admits the order eight automorphism: $$\sigma(x,y,t) = (x,y,\zeta_{8} t).$$
The fibers preserved by $\sigma$ are over $ 0$ and  $\infty$ and the action of $\sigma$ at infinity is: 
$$(x/t^4 , y/t^6 , 1/t)\longmapsto (-x/t^4 , iy/t^6 , \zeta_{8} ^7/t)$$
The discriminant polynomial of $\pi_C$ is:
$$\Delta(t) :=4a(t)^3 + 27 b(t)^2 = h_1t^{24} + h_2t^{16} + h_3t^8 +O(t^4), $$
where $$h_1=4a^3 , ~~h_2=12 a^2b+ 27c^2,~~h_3=12ab^2+54cd.$$ 
Observe that $\Delta(t)$ has 24 simple zeros for a generic choice of the coefficients. By studying the zeros of $\Delta(t)$ and looking in the classification of singular fibers of elliptic
 fibrations on surfaces (see e.g. \cite[section 3]{miranda}) one obtains the following:
for a generic choice of the coefficients of $a(t)$ and $b(t)$ the fibration has $24$ fibers of type $\rm{I}_1$ over the zeros of $\Delta(t)$, both fibers over $t=0$ and $t=\infty$ are smooth elliptic
curves, moreover the automorphism  
 $\sigma $ fixes pointwisely the fiber over $0$ and  
acts as an \textit{order 4 automorphism} on the fiber over $\infty$, hence it has two fixed points on this fiber. We show below that generically
the rank of $\Pic(X)$ is 10. This gives an example for the \textbf{first case in Table \ref{table elliptic}}. On the other hand, if $h_1=0$ so that $a=0$ the fibration acquires a fiber of type $\rm{IV}^\ast$ at $\infty$ by a generic choice 
of the parameters. The  fiber  over 0 is a smooth elliptic curve fixed pointwisely by $\sigma$. This gives an example for the \textbf{case 5 in Table \ref{table elliptic}}, this follows by Theorem \ref{elliptic} and by the fact that this case is the only one for which $\sigma$ contains an elliptic curve
in the fixed locus and the fibration has a fiber of type $IV^*$.  

 By using standard transformations on the parameters in the Weierstrass form we get that the number of moduli is 2. In fact both the polynomials $a(t) , b(t)$ depend on 2 parameters, but we can apply the transformation $(x,y)\mapsto (\lambda^2x,\lambda^3y)~;~\lambda \in \mathbb{C}^{\ast}$, 
to cancel one of the 4 parameters. Moreover the automorphisms of $\mathbb{P}^1$ commuting with $t \mapsto \zeta_8t$ are of the form $t \mapsto \mu t~;~\mu \in \mathbb{C}^{\ast}$, so we can cancel a second parameter. This shows that the family depends on 2 parameters, so that generically $\rk T_X=12$ (recall that $\rk T_X=4m_1$ and $m_1-1$ equals to the number of moduli) and $\rk\Pic(X)=10$. 
%We remark  that by a generic choice of the coefficients the action of 
%$\sigma^4$ is trivial on $\Pic(X)$. By Shioda-Tate formula $\rm{rk~Pic}(X)=2+ \rm{rk~MW} + \sum_{F:\rm{fiber}}(\#{\rm{components~of~}F-1)}$ so that the rank 
%of the Mordell-Weil group of $\pi$ is $\rm{rk}~MW=8$. This means that the fibration admits sections of infinite order. {\bf(remark to order 16!!)}\\
 }
\end{example}
%%%%%%%%%%%%%%%%%%%%%%%%%%
\begin{example}\label{example(elliptic-4)}

\rm{ As in the previous example \ref{example1}, consider again the elliptic fibration $\pi_C : X\longrightarrow \mathbb{P}^1$ in Weierstrass form given by :
$$y^2= x^3 + a(t) x + b(t).$$
where $ a(t) =at^8 + b$  and $b(t) = ct^8 + d$ with $a,b,c,d \in \mathbb{C}$. This elliptic fibration carries the non-symplectic automorphism $\sigma$  of order eight: $$(x,y,t) \mapsto (x,-y,\zeta_{8} t).$$
The fibers preserved by $\sigma$ are over $ 0 , \infty$ and the action at infinity is 
$$(x/t^4 , y/t^6 , 1/t)\longmapsto (-x/t^4 , -iy/t^6 , \zeta_{8} ^7/t).$$
The discriminant polynomial of $\pi_C$ is:
$$\Delta(t) :=4a(t)^3 + 27 b(t)^2 = h_1t^{24} + h_2t^{16} + O(t^8), $$
where
$$h_1=4a^3~\mbox {and}~h_2=12 a^2b+ 27c^2.$$ 

 We have seen in example \ref{example1} that for a generic choice of the coefficients of $a(t)$ and $b(t)$ the fibration has $24$ fibers of type $\rm{I}_1$ over the zeros of $\Delta(t)$ (see \cite[section 3]{miranda}),  moreover 
$\sigma $ acts as an $involution$ on the fiber over $0$ and it acts as an \textit{order 4 automorphism} on the fiber over $\infty$ (both fibers are smooth). So we have an example 
for the \textbf{ case 4 in Table \ref{table elliptic}}.
If $h_1=0$, that implies $a=0$, the fibration acquires a fiber of type $\rm{IV}^\ast$ at $\infty$ by a generic choice of the parameters. This is the \textbf{ case 11 in Table \ref{table elliptic}} (this is the only case where the fibartion has a fiber of type $IV^*$ and the action on the smooth fiber
is an involution).} 
\end{example}
%%%%%%%%%%%%%%%%%%%%%%%%%%%%%%%%
\begin{example}\label{example (elliptic-2)_2}
\rm{ Consider  the elliptic fibration $\pi_C :X\longrightarrow \mathbb{P}^1$ in Weierstrass form  given by $$y^2 = x^3 + a(t)x + b(t) ,$$
where $a(t) = at^8 + b$ and $b(t)=ct^4 + dt^{12}~;~a,b,c,d \in \mathbb{C}$. Observe that it carries the order eight automorphism $$\sigma : (x,y,t) \longmapsto (-x,iy ,\zeta_{8}^{7}t).$$
For generic choice of the coefficients the fiber over $t=0$ is smooth.
The automorphism $\sigma^4$ is an involution fixing the smooth elliptic curve over $t=0$. On the other hand, $\sigma$ acts on the elliptic curve over
$t=0$ as an \textit{order 4 automorphism} with $2$ isolated fixed points. Moreover it acts as the identity on the fiber over $t=\infty$, in fact the action is 
$$(x/t^4 , y/t^6 , 1/t )\longmapsto (x/t^4, y/t^6 , \zeta_{8}/t). $$
The discriminant of $\pi_C$ is :
$$\Delta(t) = h_1t^{24} + h_2t^{16} +h_3t^8 + 4b^3 ,$$
where
$$h_1=4a^3 + 27d^2 ~, ~h_2=12a^2b + 54cd ~,~ h_3=12ab^2 + 27c^2.$$
Observe that $\Delta(t)$ has 24 simple zeros for a generic choice of the coefficients. By studying the zeros of $\Delta(t)$ and looking in the classification of singular fibers of elliptic fibrations on surfaces (e.g \cite[section 3]{miranda}) one obtains the following cases: for the generic choice of the coefficients of $a(t)$ and $b(t)$ the fibration has 24 fibers of type $I_1$. We get again an example for {\bf case 4 in Table \ref{table elliptic}}. If $h_1=0$ the fibration has a fiber of type $\rm{I}_8$ at infinity.
If $h_1=h_2=0$ we get a fiber $\rm{I}_{16}$. By \cite[$\S$3]{kondoell} a holomorphic two form is given by $\omega_X = (dt \wedge dx)/2y$ and so the action of $\sigma$ on it is 
$\sigma^{\ast}(\omega_X)=\frac{-\zeta^{7}_{8}}{i}\omega_X=\zeta_8 \omega_X$. We  determine now exactly the type of the local action of $\sigma$ at the two fixed points on the elliptic 
curve $C$. We look at the elliptic fibration locally around the fiber over $t=0$. The equation in $\mathbb{P}^2 \times \mathbb{C}$ is given by:
$$
F(x,y,z,t):= zy^2-(x^3+(at^8+b)z^2x+(ct^4+dt^{12})z^3)=0. 
$$ 
Where $(x:y:z)$ are the homogeneous coordinates of $\mathbb{P}^2$ . The fiber at $t=0$ has equation 
$$
f:=\{ zy^2-x^3-b z^2x=0\}
$$ 

and for $b\in\IC$ generic is smooth since the partial derivatives $\partial f/\partial x , \partial f/\partial y, \partial f/\partial z$ are not simultaneously zero. The two fixed points by $\sigma$ are $p:=(0:1:0)$ and $p':=(0:0:1)$. The point 
$p':=(0:0:1)$ is contained in the chart $z=1$ and it belongs to the open set $F_x:=\partial F/ \partial x \neq 0$, in fact $F_x(p'):=\frac{\partial F(x,y,1,0)}{\partial x} \neq 0$. The 
 one-form for the elliptic curve in this open subset is:
$$dy/F_x(p') = dy/(-3x^2-b). $$
The action of $\sigma$ here is a multiplication by $i$ : ($\sigma^{\ast}(dy/F_x(p'))=i(dy/F_x(p'))$), so that the action on the holomorphic 2-form $dt \wedge (dy/(-3x^2(at^8+b))$ is
the multiplication by $\zeta_8$ as expected. In particular the local action at $p'$ is of type (7,2). Similarly we can do the computation 
on the open subset in the chart $y=1$ which contains the fixed point $p$, and we can find again the same action (7,2). Observe that since $\sigma$ acts as the identity on the fiber over $t=\infty$, it preserves the curves of the singular fibers (of type $I_8$ or $I_{16}$). This gives  an example for \textbf{ the  cases 12 and 16 in Table \ref{table elliptic}}. 

 On other hand, the fibration $\pi_C$ admits also the automorphism $\tau (x,y,t)=(-x,-iy,\zeta^{3}_{8} t)$. This automorphism acts also by multiplication by $\zeta_8$ on the 
holomorphic 2-form $\omega_X$, thus $\tau$
 is not  a power of $\sigma$ (i.e it is a new automorphism). Moreover the square of $\tau$ preserves each components 
of the fiber at $t=\infty$  in fact the action at infinity is:
$$(x/t^4,y/t^6,1/t)\longmapsto (x/t^4,-y/t^6,\zeta^{5}_{8}/t). $$
By a similar computation as above one sees 
that the local action at the fixed points on the fiber $C$ is of type (3,6), so we have an example for \textbf{the cases 10 and 15 in Table \ref{table elliptic}} respectively.}
\end{example}
%%%%%%%%%%%%%%%%%%%%%%%%%%
\begin{example}\label{translation}{\it (Translation)}.\\%The first case of Table \ref{table2} and the fourth , eighth case in Table \ref{table3_2} :}\\
\rm{We give here an example for the \textbf{cases 2 , 8 and 13 in Table \ref{table elliptic}}. Observe that in these three cases the 
non-symplectic automorphism
 of order 8 on $X$ acts as a translation of order two on the $\sigma$-invariant fiber $C'$. 
%(where the square of this translation is the identity on $C'$ 
%since it fixes the smooth elliptic curve $C'$ in the 
%first case and has at least one fixed component in $C'$ which is respectively of type $I_8$ and $I_{16}$ for the remaining two cases).

 It is well known that an elliptic $K3$ surface with a 2-torsion section can be written with equation:
$$
y^2=x(x^2+a(t)x+b(t),
$$
where $a(t)$ and $b(t)$ are polynomials of degree 4 and 8 respectively. Or equivalently one can write the equation in Weierstrass form: 
$$
y^2=x^3+A(t)x+B(t),
$$
where $$A(t)=9b(t)-3a(t)^2~ \mbox{and}~ B(t)=3a(t)^2-9a(t)b(t).$$
Now the map: 
\begin{equation}\label{4}
\tag{4}
\tau:(x,y,t) \mapsto (y^2/x^2-a(t)-x,(y/x).\tau(x),t),
\end{equation}
with  $\tau(x):=(y^2/x^2)-a(t)-x$ is an automorphism on $X$ that acts as a translation of order two on the generic fiber of $\pi$.

 Consider now the non-symplectic automorphism of order eight on $X$:
 $$\sigma : (x,y,t) \mapsto (-x,iy,\zeta^{7}_{8}t).$$
 This automorphism preserves the jacobian elliptic fibration $\pi : X \longrightarrow \mathbb{P}^1$ defined as follows:
 $$y^2=x(x^2+a(t)x+b(t)),$$
 where $a(t)=\alpha t^4, b(t)=\beta t^8+\gamma~; ~\alpha , \beta , \gamma \in \mathbb{C}$. Or equivalently the fibration can be written as:
 $$y^2=x^3+A(t)x+B(t),$$
 such that $A(t)=(9\beta-3\alpha^2)t^8 +9\gamma$ and $B(t)=(2\alpha^3-9\alpha \beta)t^{12}-9\alpha\gamma t^4$. The discriminant of $\pi$ is:
 $$\Delta(t)=K(\beta t^8 + \gamma)^2[(\alpha^2-4\beta)t^8-4\gamma];~ K\in \mathbb{C} \rm{~is~a~ constant}.$$
 Consider now the translation $\tau$ : 
 $$\tau: (x,y,t) \mapsto (y^2/x^2-\alpha t^4-x, (y/x).\tau(x),t).$$
 As we have seen, $\tau$ is an automorphism of $X$ and it acts as a translation of order two on the generic fiber of $\pi$. 
 Moreover, one can get easily that $\tau \circ \sigma = \sigma \circ \tau$, thus the $K3$ surface $X$ has the order eight non-symplectic automorphism $\sigma' :=\sigma \circ \tau $. Observe that the automorphisms $\sigma$ and $\sigma'$ act with order eight on $\mathbb{P}^1$, they preserve the two fibers over $t=0$ and $t=\infty$ and act as an automorphism of order four on the smooth fiber over $t=0$ given by  
 $f:= zy^2-x^3-9\gamma xz^2$, for $\gamma\in\IC$ generic. Moreover, $\sigma$ acts as the identity on the fiber over $t=\infty$, while $\sigma'$ acts on it as an order two translation (observe that the action of $\sigma$ at infinity is $(x/t^4,y/t^6,1/t) \mapsto (x/t^4,y/t^6,\zeta _8 1/t)$ and $\sigma'^2=(\sigma \circ \tau)^2=id$).
 
  Studying the zeros of the discriminant $\Delta(t)$, and looking in the classification of singular fibers of elliptic fibrations on surfaces (e.g \cite[$\S$3]{miranda}) we get the following:
\begin{itemize}
 \item For generic $\alpha,\beta,\gamma$ the fibration $\pi$ has 8 fibers of type $I_2$ and 8 fibers of type $I_1$ over the zeros of $\Delta(t)$, $\sigma'$ acts as an order
 four automorphism on the fiber over 0 and it acts as an order two translation on the fiber at $\infty$ (both fibers are smooth). This gives an example for the \textbf{case 2 in Table \ref{table elliptic}}.
  \item If $\alpha^2-4\beta=0$ with $(\beta \neq 0)$, then $\Delta(t)=K(\beta t^8+\gamma)^2(-4\beta)~;~K\in\mathbb{C}^{\ast}.$ So that the fibration acquires a fiber of type $I_8$ over $\infty$ 
(in fact $\Delta(t)$ has a zero of order 8 over $t=\infty$ and $A(t),B(t)$ are non zero). The automorphism $\sigma'$ acts as a rotation of order two on the fiber $I_8$ (see \cite{miranda}). This corresponds to the \textbf{case 8 in Table \ref{table elliptic}}.
  \item If $\beta=0,~ (\alpha \neq 0)$,  the discriminant $\Delta(t)=K\gamma^2(\alpha^2 t^2-4\gamma)$ vanishes at $t=\infty$ with order 16, and $A(t) , B(t)$ 
are nonzero at $\infty$. Thus we get a fiber of type $I_{16}$, on which $\sigma$ acts as a rotation of order two. We are in the \textbf{case 13 of Table \ref{table elliptic}}. 
  \end{itemize}
}
% As we have seen in the previous case $\sigma^{'}$ acts on  $I_{16}$ as the eighth case of Table \ref{table3_2}, the automorpfism $\sigma^{'^2}$ fixes 4 curves on the fiber $I_{16}$ by \cite{AS order4}.
\end{example}
%%%%%%%%%%%%%%%%%%%%%%%
\begin{example}{\it Quadruple Quartics}.\\
\rm{Take the fourfold cover of $\mathbb{P}^2$:
$$
t^4=x_0(l_3(x_1,x_2)+x_0^2l_1(x_1,x_2))
$$
where $l_3(x_1,x_2)$ is homogeneous of degree three and $l_1(x_1,x_2)$ is homogeneous of degree 1. This is invariant for the action of the order $8$ non symplectic automorphism:
$$
(t, x_0, x_1, x_2)\mapsto (\zeta_8 t, -x_0, x_1, x_2)
$$
it fixes the inverse image of the curve $\{x_0=0\}$ which is rational and $4$  points on the curve $C:\{l_3(x_1,x_2)+x_0^2l_1(x_1,x_2)=0\}$ which is in fact elliptic. 
This gives another example for the \textbf{case 11 of Table \ref{table elliptic}}. }
\end{example}
\section{The Table of the $\sigma$-invariant elliptic fibrations\label{table elliptic}}
We give the table for
 the classification of the non-symplectic automorphisms of order 8 on an elliptic K3 surface. The cases for which we have an example 
are denoted with $\surd$. We  list also   
in this table the invariants  $r, l, m$ and $m_1$ of $\sigma$ which denote the rank of the eigenspaces of $(\sigma)^*$ in $H^2(X,\mathbb{C})$ relative to the eigenvalues $1,-1,i$ and $\zeta_8$ 
respectively. Recall that by $\# C$ we denote the number of $\sigma$-invariant smooth elliptic curves. 
%We give the invariants for the non-symplectic automorphisms of order eight in all the possible cases. 

%Moreover it also shows the cases 
%when we have an example, we indicate these with $\surd$ ,  or when there is not an example , we refer to this case  with $-$. Moreover, we  resume   
%in this table the invariants  $r, l, m$ and $m_1$ of $\sigma$ which denote the rank of the eigenspaces of $(\sigma)^*$ in $H^2(X,\mathbb{C})$ relative to the eigenvalues $1,-1,i$ and $\zeta_8$ 
%respectively.
\newpage

%\begin{center}
%\section*{The $\sigma$-invariant elliptic fibrations}\label{table elliptic}
%\end{center}
{\small
\begin{center}
\begin{rotate}{-90}
%\begin{table}\label{table elliptic}
%\begin{tabular*}[|c|c c c|c|c|c|cc|c|c|c|]
\begin{tabular*}{1.50\textwidth}{@{\extracolsep{\fill}}|c|c c c|c|c|c|cc|c|c|c|}
\hline
   Example&$r$&$l$&$m$&$k_{\sigma^2}$&$\# C$& $\rk \Pic(X)$& $k_{\sigma^4}$&$N$& $(n_2,n_3,n_4)$& $k$& action of $\sigma$ on fixed elliptic curves and singular fibers\\
 %&  &  &  &  &  &   &  &  &        &  &                        \\
\hline
1. $\surd$&3 &3 &2 &0 &2 &10 &0 &2 &(2,0,0) &0 &( identity, order four)\\
       &  &  &  &  &  &   &  &  &        &  &                        \\
2. $\surd$&3 &3 &2 &0 &  &  &  &2 &(2,0,0) &0 &(translation of order two, order four)\\
 &  &  &  &  &  &   &  &  &        &  &                        \\
3. $-$&3 &3 &2 &0 &  &  &  &2 &(2,0,0) &0 &(translation of order four, order four)\\
 &  &  &  &  &  &   &  &  &        &  &                        \\
4. $\surd$&5 &1 &2 &0 &  &  & &6  &(0,2,4) &0 &(involution, order four)\\
 &  &  &  &  &  &   &  &  &        &  &                        \\
\hline
\hline
5. $\surd$&6 &4 &2 &1 &1 &14 &4 &4 &(1,1,2 )&0 &( identity, reflection of $IV^*$ )\\
 &  &  &  &  &  &   &  &  &        &  &                        \\
6. $-$&6 &4 &2 &1 &  &   &  &4 &(1,1,2)&0&(translation of order two, reflection of $IV^*$ )\\
 &  &  &  &  &  &   &  &  &        &  &                        \\
7. $-$&6 &4 &2 &1 &  &   &  &4 &(1,1,2)&0&(translation of order four, reflection of $IV^*$ )\\
 &  &  &  &  &  &   &  &  &        &  &                        \\
8. $\surd$&6 &6 &1 &2 &  &   &  &2&(2,0,0)&0&(order four, rotation of order $2$ on $I_8$ )\\
 &  &  &  &  &  &   &  &  &        &  &                        \\
9. $-$&4 &4 &3 &0 &  &   &  &2&(2,0,0)&0&(order four, rotation of order $4$ on $I_8$)\\
 &  &  &  &  &  &   &  &  &        &  &                        \\
10. $\surd$&8 &4 &1 &2 &  &   & &6&(0,2,4)&0&(order four, reflection on $I_8$)\\
 &  &  &  &  &  &   &  &  &        &  &                        \\
%\hline
11. $\surd$&10&0 &2 &1 &  &   &&10&(3,3,4)&1&(involution, preserves each curve of $IV^*$)\\
 &  &  &  &  &  &   &  &  &        &  &                        \\
12. $\surd$&10&2 &1 &2 &  &   & &8&(4,2,2)&1&(order four, preserves each curve of $I_8$)\\
 &  &  &  &  &  &   &  &  &        &  &                        \\
\hline
\hline

13. $\surd$&9 &9 &0 &4 &1 &18 &8 &2 &(2,0,0)&0&(order four, rotation of order $2$ on $I_{16}$ )\\
 &  &  &  &  &  &   &  &  &        &  &                        \\
14. $-$&5 &5 &4 &0 &  &   &&2&(2,0,0)&0&(order four, rotation of order $4$ on $I_{16}$ )\\
 &  &  &  &  &  &   &  &  &        &  &                        \\
15. $\surd$&11&7 &0 &4 &  &   & &6&(0,2,4)&0&(order four, reflection on $I_{16}$ )\\
 %&  &  &  &  &  &   &  &  &        &  &                        \\
%\hline
 &  &  &  &  &  &   &  &  &        &  &                        \\
16. $\surd$&17&1 &0 &4 &  &   & &14&(6,4,4)&2&(order four, preserves each curve of $I_{16}$ )\\
\hline
\end{tabular*}
%\end{table}
\end{rotate}
%\caption{$\sigma-$invariant elliptic fibrations }
%\caption{\label{table elliptic2}The case of a $\sigma-$invariant elliptic curve }
\end{center} 
}
\newpage
\bibliographystyle{amsplain}
\bibliography{Biblio}

\providecommand{\bysame}{\leavevmode\hbox to3em{\hrulefill}\thinspace}
\providecommand{\MR}{\relax\ifhmode\unskip\space\fi MR }
% \MRhref is called by the amsart/book/proc definition of \MR.
\providecommand{\MRhref}[2]{%
  \href{http://www.ams.org/mathscinet-getitem?mr=#1}{#2}
}
\providecommand{\href}[2]{#2}
\begin{thebibliography}{10}

\bibitem{dimathesis}
D.~Al~Tabbaa, \emph{{On some automorphisms of K3 surfaces}}, PhD thesis
  University of Poitiers, December 2015.

\bibitem{paper16}
D.~Al~Tabbaa, A.~Sarti, and S.~Taki, \emph{{Classification of order sixteen
  non-symplectic automorphisms on K3 surfaces}}, To appear in J. Korean Math.
  Soc.

\bibitem{AS3}
M.~Artebani and A.~Sarti, \emph{Non-symplectic automorphisms of order 3 on
  {$K3$} surfaces}, Math. Ann. \textbf{342} (2008), no.~4, 903--921.

\bibitem{ASorder4}
\bysame, \emph{{Symmetries of order four on K3 surfaces}}, J. Math. Soc. Japan
  \textbf{67} (2015), no.~2, 1--31.

\bibitem{ast}
M.~Artebani, A.~Sarti, and S.~Taki, \emph{{K3 surfaces with non-symplectic
  automorphisms of prime order.}}, Math. Z. \textbf{268} (2011), no.~1-2,
  507--533, with an appendix by Shigeyuki Kond{\=o}.

\bibitem{AtiyahSinger}
M.~F. Atiyah and I.~M. Singer, \emph{The index of elliptic operators. {III}},
  Ann. of Math. (2) \textbf{87} (1968), 546--604.

\bibitem{jimmy}
Jimmy Dillies, \emph{On some order 6 non-symplectic automorphisms of elliptic
  {K}3 surfaces}, Albanian J. Math. \textbf{6} (2012), no.~2, 103--114.
  \MR{3009163}

\bibitem{DK}
I.V. Dolgachev and S.~Kond{\=o}, \emph{Moduli of {$K3$} surfaces and complex
  ball quotients}, Arithmetic and geometry around hypergeometric functions,
  Progr. Math., vol. 260, Birkh\"auser, Basel, 2007, pp.~43--100.

\bibitem{kondoell}
S.~Kond{\=o}, \emph{Automorphisms of algebraic {$K3$} surfaces which act
  trivially on {P}icard groups}, J. Math. Soc. Japan \textbf{44} (1992), no.~1,
  75--98.

\bibitem{miranda}
R.~Miranda, \emph{{The basic theory of elliptic surfaces. Notes of lectures.}},
  {Dottorato di Ricerca di Matematica, Universit\`a di Pisa, Dipartimento di
  Matematica. Pisa: ETS Editrice}, 1989 (English).

\bibitem{Nikulin1}
V.V. Nikulin, \emph{Finite groups of automorphisms of {K}\"ahlerian surfaces of
  type {$K3$}}, Uspehi Mat. Nauk \textbf{31} (1976), no.~2(188), 223--224.

\bibitem{nikulinfactor}
\bysame, \emph{Factor groups of groups of automorphisms of hyperbolic forms
  with respect to subgroups generated by 2-reflections. {A}lgebrogeometric
  applications}, J. Soviet. Math. \textbf{22} (1983), 1401--1475.

\bibitem{matthias}
M.~Sch{\"u}tt, \emph{{$K3$} surfaces with non-symplectic automorphisms of
  2-power order}, J. Algebra \textbf{323} (2010), no.~1, 206--223.

\bibitem{takiauto}
S.~Taki, \emph{Classification of non-symplectic automorphisms of order 3 on
  {K}3 surfaces}, Math. Nachr. \textbf{284} (2011), 124--135.

\bibitem{Taki}
\bysame, \emph{Classification of non-symplectic automorphisms on {$K3$}
  surfaces which act trivially on the {N}\'eron-{S}everi lattice}, J. Algebra
  \textbf{358} (2012), 16--26.

\bibitem{Taki32}
\bysame, \emph{On {O}guiso's {$K3$} surface}, J. Pure Appl. Algebra
  \textbf{218} (2014), no.~3, 391--394.

\bibitem{zhang}
D-Q. Zhang, \emph{Automorphisms of {$K3$} surfaces}, Proceedings of the
  {I}nternational {C}onference on {C}omplex {G}eometry and {R}elated {F}ields
  (Providence, RI), AMS/IP Stud. Adv. Math., vol.~39, Amer. Math. Soc., 2007,
  pp.~379--392.

\end{thebibliography}
\end{document}